\newcommand{\cat}			{\mathcal{C}}
\newcommand{\dat}			{\mathcal{D}}
\newcommand\define[1]	       {{\bf{#1}}}
\newcommand{\mono}            {{\hookrightarrow}}
\newcommand{\uExt}			{\underline{\mathrm{Ext}}}
\newcommand{\uIExt}			{\underline{{\mathcal{E}\kern -.5pt \mathrm{xt}}}}
\renewcommand{\lim}            {{\mathsf{lim}}\,}
\newcommand{\colim}            {{\mathsf{colim}}\,}
\newcommand{\im}               {{\mathrm{im}}\,}
\renewcommand{\ker}            {{\mathrm{ker}}\,}
\newcommand{\coker}            {{\mathrm{coker}}\,}
\newcommand{\id}               {{\mathrm{id}}}
\newcommand{\Int}		{\mathrm{Int}\,}
\newcommand{\Inc}               {{\mathrm{Inc}}}
\newcommand{\Z}                 {{\mathbb{Z}}}
\newcommand{\One}             {{\mathbf{1}}}
\newcommand{\Gr}                {\mathcal{R}}
\newcommand{\op}                {\mathrm{op}}
\newcommand{\IHom}{{\mathcal{H}\kern -.5pt \mathrm{om}}}
\newcommand{\Hom}{\mathrm{Hom}}
\newcommand{\uHom}{\underline{\mathrm{Hom}}}
\newcommand{\uIHom}{\underline{{\mathcal{H}\kern -.5pt \mathrm{om}}}}
\newcommand{\Nat}{\mathrm{Nat}}
\title{A Categorical Approach to Möbius Inversion via Derived Functors}
\author[1]{Alex Elchesen}
\author[1]{Amit Patel}
\affil[1]{Department of Mathematics, Colorado State University}
\date{}
\begin{document}
	
\maketitle

\abstract{
We develop a cohomological approach to M\"obius inversion using derived functors in the enriched categorical setting. For a poset $P$ and a closed symmetric monoidal abelian category~$\cat$, we define M\"obius cohomology as the derived functors of an enriched hom functor on the category of $P$-modules. We prove that the Euler characteristic of our cohomology theory recovers the classical M\"obius inversion, providing a natural categorification. As a key application, we prove a categorical version of Rota's Galois Connection. Our approach unifies classical ideas from combinatorics with homological algebra.}

\section{Introduction} \label{sec:intro}

M\"obius inversion is a fundamental principle in combinatorics, generalizing the classical inclusion-exclusion principle to partially ordered sets. 
Originally emerging from number theory in the 19th century, 
the concept was later vastly generalized by Gian-Carlo Rota~\cite{rota1964foundations}, who recognized its deeper structural role in combinatorics. 
At its core, M\"obius inversion provides a systematic way to recover local information from global summaries. Given a function $f : P \to \Gr$ from a finite poset $P$ to a commutative ring $\Gr$, 
its upper M\"obius inversion is the unique function $\partial_+ f : P \to \Gr$ such that for all $a \in P$,
\[
f(a) = \sum_{b : a \leq b} \partial_+ f(b).
\]
Dually, its lower M\"obius inversion is the unique function $\partial_- f : P \to \Gr$ such that for all $b \in P$,
\[
f(b) = \sum_{a : a \leq b} \partial_- f(a).
\]

Our entry into M\"obius inversion stems from its fundamental role in persistent homology, a key tool in topological data analysis~\cite{Patel:2018, persistence_over_posets}. 
In pursuit of generalizations, Patel and Skraba developed M\"obius homology for poset modules $M : P \to \cat$ (functors valued in an abelian category $\cat$)~\cite{patel2023mobius_homology}.
Their theory associates a homology object $H_d^\downarrow M(a)$ to every $a \in P$ and index $d$ such that
its Euler characteristic recovers the lower M\"obius inversion of the dimension function
$m : P \to \Gr(\cat)$. They construct these homology objects via a simplicial cosheaf over the order complex of $P$.

In this paper, we develop a cohomological approach to M\"obius inversion through derived functors. Rather than dualizing the construction of Patel and Skraba to simplicial sheaves, we provide a more general and functorial perspective through enriched hom functors. Specifically, given a point $a\in P$, we define the indicator module $\One_a$. When $\cat$ is closed monoidal, we show that the set of natural transformations $\Nat(\One_a,M)$ forms an object $\uHom(\One_a,M)$ of $\cat$ (Definition \ref{def:enriched_hom}). The functor $\uHom(\One_a,-):\cat^P\to \cat$ is left-exact, and we define its right-derived functors $\uExt^d(\One_a,M)$ as our M\"obius cohomology objects.

This derived functor approach offers several key advantages. First, it generalizes naturally to handle cohomology for arbitrary spreads and modules, extending beyond the traditional scope of M\"obius inversion. Second, it reveals connections with classical homological algebra, allowing us to leverage established theoretical machinery. Our framework culminates in a categorical version of Rota's Galois Connection Theorem, expressed as an enriched adjunction.

\paragraph{Previous Work}

The study of the total cohomology of $P$-modules using derived functors has a rich history, with foundational contributions by Deheuvels~\cite{Deheuvels1962}. Baclawski made significant advances in understanding the homological aspects of posets through two key papers. His 1975 work~\cite{Baclawski1975} developed a homological interpretation of Whitney numbers of geometric lattices, while his 1977 paper~\cite{Baclawski1977} established deep connections between Galois connections and spectral sequences. While this body of work laid important groundwork, the explicit connection to M\"obius inversions remained unexplored until the recent work of Patel and Skraba~\cite{patel2023mobius_homology}.

A parallel development in the categorical treatment of M\"obius inversion emerged through the work of Leroux~\cite{Leroux1975}, who introduced the concept of M\"obius categories. This framework was significantly expanded by Content, Lemay, and Leroux~\cite{Content1980}, who developed a comprehensive categorical setting for M\"obius inversion. While their work established important functorial properties of incidence algebras and M\"obius functions, it did not pursue the homological aspects that we develop here.

Our investigation originates from questions in persistent homology, a fundamental tool in topological data analysis. While we do not explicitly address persistent homology in this paper, the discussion of persistent homology in~\cite{patel2023mobius_homology} dualizes to our setting.
In this context, our M\"obius cohomology theory shares important connections with recent work by Oudot and Scoccola~\cite{doi:10.1137/22M1489150} on bigraded Betti numbers.

Recent work by G\"ulen and McCleary~\cite{GulenMcCleary} demonstrated how Rota's Galois connection theorem reveals the functorial character of M\"obius inversion. Our Theorem~\ref{thm:categorical_RGCT} substantially generalizes their observation, providing a comprehensive categorical framework that unifies the combinatorial and functorial perspectives on M\"obius inversion.

\paragraph{Our contributions}

The main contributions of this paper are:
\begin{enumerate}
    \item We develop a cohomological theory for M\"obius inversion using derived functors in the enriched categorical setting (Section~\ref{sec:mobius_cohomology}). We prove that the Euler characteristic of our cohomology theory coincides with the classical M\"obius inversion (Theorem~\ref{thm:cohom_euler_char}), providing a natural categorification while simultaneously offering a more general framework for studying poset modules.
    
    \item We establish an explicit formula for computing M\"obius cohomology using the standard cofree resolution (Section~\ref{sec:derived_hom}). Specifically, we show that applying $\uHom(\One_a,-)$ to the standard cofree resolution (Equation~\ref{eq:cofre_res}) yields a cochain complex (Equation~\ref{eq:standard_mobius_complex}) whose cohomology groups recover M\"obius cohomology. This formula reveals how our derived functor approach recovers the construction using sheaves on the order complex.

    \item We prove a categorical version of Rota's Galois Connection Theorem (Theorem~\ref{thm:categorical_RGCT}) that implies the classical result (Theorem~\ref{thm:rota_classic}). This generalization takes the form of an enriched adjunction between the pushforward and pullback functors induced by a Galois connection (Section~\ref{sec:Galois_connections}).
    
    \item We demonstrate that our derived functor approach unifies various perspectives on M\"obius inversion through several key isomorphisms (Proposition~\ref{prop:cofree_hom}, Proposition~\ref{prop:euler_indicator}, and Theorem~\ref{thm:rota_ext}), connecting classical combinatorial methods, homological algebra, and categorical techniques in a single coherent framework.
\end{enumerate}

Throughout, we have made an effort to keep the exposition self-contained, providing background material and motivation for key concepts in Section~\ref{sec:preliminaries}. This makes the paper accessible to readers with a basic understanding of category theory and homological algebra.

\paragraph{Outline}

The paper is organized as follows. Section~\ref{sec:preliminaries} establishes the categorical preliminaries, including key concepts from homological algebra and enriched category theory. Section~\ref{sec:poset_modules} introduces poset modules and develops the fundamental constructions needed for our cohomology theory. Section~\ref{sec:derived_hom} defines derived hom functors and explores their properties in the context of poset modules. Section~\ref{sec:mobius_cohomology} introduces M\"obius cohomology and establishes its relationship to classical M\"obius inversion. Finally, Section~\ref{sec:Galois_connections} examines Galois connections between posets and proves our categorical version of Rota's theorem.

\section{Categorical Preliminaries}
\label{sec:preliminaries}

In this section, we introduce the key categorical concepts, constructions, and notation used throughout the paper. These preliminaries provide the necessary framework for later sections, focusing on abelian and monoidal categories, ends, and basic homological algebra. For a deeper study, see \cite{maclane1998categories,weibel1994homological}.

\subsection{Adjunctions}

For a category $\cat$, we denote by $\Hom_\cat (A,B)$ the set of morphisms from $A$ to $B$. 
Recall that an adjunction between two categories $\cat$ and $\dat$ consists of a pair of functors 
$F : \cat \to \dat$ and $G : \dat \to \cat$, together with natural isomorphisms of sets 
$\Phi : \Hom_\dat(F(A), B) \cong \Hom_\cat(A, G(B))$. 
In this situation, we say $F$ is \emph{left adjoint} to $G$ and $G$ is \emph{right adjoint} to $F$, denoted $F \dashv G$.
An important property of left adjoints is that they are cocontinuous, while right adjoints are continuous, 
meaning left adjoints preserve colimits and right adjoints preserve limits.

\subsection{Abelian Categories}

A category $\cat$ is \emph{preadditive} if for every set $\Hom_\cat(A,B)$ there is an abelian group structure, 
and composition is bilinear, i.e., 
\[ f \circ (g + h) = (f \circ g) + (f \circ h), \quad (f + g) \circ h = (f \circ h) + (g \circ h). \]
A functor $F : \cat \to \dat$ between preadditive categories is \emph{additive} if for all objects $A, B$, 
the induced map 
$\Hom_\cat(A, B) \to \Hom_\dat(F(A), F(B))$
is a group homomorphism.

A \emph{biproduct} of objects $A_1, \ldots, A_n$ in a preadditive category $\cat$ is an object 
$A_1 \oplus \cdots \oplus A_n$ with morphisms $\pi_k : A_1 \oplus \cdots \oplus A_n \to A_k$ 
and $\iota_k : A_k \to A_1 \oplus \cdots \oplus A_n$ satisfying:
\[
\pi_k \circ \iota_k = \id_{A_k}, \quad \pi_l \circ \iota_k = 0 \text{ for } k \neq l.
\]
In this case, $\big( A_1 \oplus \cdots \oplus A_n, \pi_k \big)$ is a product, and 
$\big( A_1 \oplus \cdots \oplus A_n, \iota_k \big)$ is a coproduct. 
A preadditive category is \emph{additive} if it admits all finite biproducts.

A preadditive category $\cat$ is \emph{abelian} if it has a zero object, all binary biproducts, 
kernels and cokernels, and every monomorphism and epimorphism is normal. 
Functors between abelian categories are typically additive, and if $F \dashv G$, both $F$ and $G$ are automatically additive.

Exactness is central in abelian categories. An additive functor $F : \cat \to \dat$ between abelian categories is:
	\begin{itemize}
	\item \emph{left-exact} if it preserves exactness of sequences of the form 
  \[ 0 \to A \to B \to C, \]
  \item \emph{right-exact} if it preserves exactness of sequences of the form 
  \[ A \to B \to C \to 0. \]
	\end{itemize}

A functor is \emph{exact} if it is both left-exact and right-exact. If $F \dashv G$, then $F$ is automatically right-exact and $G$ is left-exact.

\subsection{Closed Monoidal Categories}

A \emph{monoidal category} consists of a category $\cat$, a bifunctor $\otimes : \cat \times \cat \to \cat$ 
(called the tensor product), an object $\One$ (the unit object), and natural isomorphisms:
\[
\alpha_{A,B,C} : (A \otimes B) \otimes C \cong A \otimes (B \otimes C), \quad 
\lambda_A : \One \otimes A \cong A, \quad \rho_A : A \otimes \One \cong A.
\]
These isomorphisms satisfy certain coherence conditions (see \cite[VII]{maclane1998categories}). 
A monoidal category is \emph{symmetric} if there is a natural isomorphism $s_{A,B} : A \otimes B \cong B \otimes A$ 
that satisfies its own coherence conditions.
A (symmetric) monoidal category $(\cat, \otimes, \One)$ is \emph{closed} if for each $A \in \cat$, the functor 
$-\otimes A: \cat \to \cat$ has a right adjoint. This right adjoint is denoted $\IHom_\cat(A, -) : \cat \to \cat$ 
and is called the \emph{internal hom}.
\begin{prop}
\label{prop:hom_one}
For any closed symmetric monoidal category $(\cat, \otimes, \One)$ and for all objects $B$ in $\cat$, we have
$\IHom_\cat(\One, B) \cong B.$
\end{prop}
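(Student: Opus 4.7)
The plan is to exploit the defining adjunction of the internal hom together with the unit isomorphism, and then invoke the Yoneda lemma to strip off the ambient $\Hom(A,-)$.

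First I would write down the adjunction that defines $\IHom_\cat(\One,-)$: for every object $A \in \cat$, there is a natural isomorphism
\[
\Hom_\cat(A \otimes \One, B) \cong \Hom_\cat(A, \IHom_\cat(\One, B)).
\]
Next I would use the right unitor $\rho_A : A \otimes \One \cong A$, which is part of the monoidal structure and is natural in $A$. Precomposition with $\rho_A^{-1}$ yields a natural isomorphism $\Hom_\cat(A, B) \cong \Hom_\cat(A \otimes \One, B)$. Composing the two natural isomorphisms gives
\[
\Hom_\cat(A, B) \cong \Hom_\cat(A, \IHom_\cat(\One, B)),
\]
natural in $A \in \cat$.

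Finally I would invoke the Yoneda lemma: a natural isomorphism between the representable functors $\Hom_\cat(-, B)$ and $\Hom_\cat(-, \IHom_\cat(\One, B))$ arises from a unique isomorphism $B \cong \IHom_\cat(\One, B)$ in $\cat$, which is the desired conclusion.

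There is no real obstacle here; the only mild subtlety is checking that the isomorphism built from $\rho$ and the hom-set adjunction is natural in $A$, which is immediate since both $\rho$ and the adjunction bijection are natural. One could alternatively argue directly that $\IHom_\cat(\One, -)$ is right adjoint to $-\otimes \One \cong \id_\cat$, and right adjoints to naturally isomorphic functors are naturally isomorphic, with $\id_\cat$ being self-adjoint; I would mention this as a one-line alternative.
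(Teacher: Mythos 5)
Your proof is correct and matches the paper's argument essentially verbatim: both use the tensor--hom adjunction together with the unitor to get $\Hom_\cat(A,B)\cong\Hom_\cat(A,\IHom_\cat(\One,B))$ naturally in $A$, then conclude by the Yoneda lemma. Your added remark about uniqueness of right adjoints is a fine alternative but not a different method in substance.
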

\begin{proof}
For any $A\in \cat$, we have $\Hom_\cat(A,\IHom_\cat(\One,B)) \cong \Hom_\cat(A\otimes \One, B)\cong \Hom_\cat(A,B)$. The result follows now from the Yoneda lemma.
\end{proof}

\subsection{Abelian Symmetric Monoidal Categories}

We will focus on categories that are both abelian and closed symmetric monoidal. A key compatibility condition between the abelian and monoidal structures 
is that the tensor product must be additive in both variables. 
When the monoidal structure is closed, this compatibility condition is automatically satisfied because biproducts serve as both limits and colimits, and the tensor product is left adjoint to the internal hom functor.
\begin{center}
\textbf{Henceforth, $\cat$ is a small, complete, abelian, and closed symmetric monoidal category.}
\end{center}

\subsection{Ends}

Fix a functor $S : \cat^\op \times \cat \to \dat$. A \emph{wedge} of $S$, denoted $e: W \to S$, is an object $W \in \dat$ together with maps 
$e_A : W \to S(A,A)$ such that for every $f: A \to B$, the diagram
\begin{equation*}
		\begin{tikzcd}
			W \ar[rr, "e_{B}"] \ar[d, "e_{A}"] && S(B,B) \ar{d}{S(B,f)}\\
			S(A,A) \ar{rr}{S(f,B)} && S(B,A)
		\end{tikzcd}
	\end{equation*}
commutes. The \emph{end} of $S$, denoted $\int_A S(A,A)$, is the universal wedge, meaning any wedge $e: W \to S$ 
factors uniquely through it.
That is, if $e:W\to S$ is any wedge of $S$, then there is a unique map $\phi:W\to \int_A S(A,A)$ 
satisfying $\pi_A \circ \phi = e_A$, for all objects~$A \in \cat$.

The end of an ordinary functor $T:\cat\to\dat$, denoted $\int_A T(A)$, is defined by taking the end of the 
composition $S = \cat^\op\times \cat\stackrel{p}{\to}\cat\stackrel{T}{\to}\dat$, 
where $p$ denotes the projection functor onto the second factor. 
In this case, the end reduces to the limit of $T$.

Given functors $F,G:\cat\to\dat$, the set of natural transformations $\Nat(F,G)$ from $F$ to $G$ is given (up to isomorphism) by the formula
\begin{equation}\label{eq:end_formula_natural_transformations}
	\Nat(F,G) \cong \int_{A}\Hom_\dat(F(A),G(A)).\end{equation}
This formula and generalizations of it are central to our constructions. If $\dat$ is a closed category, then we can replace $\Hom_\dat(F(A),G(A))$ in the formula above by $\IHom_\dat(F(A),G(A))$ to make $\Nat(F,G)$ an object of $\dat$ (see Definition~\ref{def:enriched_hom} below).

\subsection{Injective Resolutions}
 \label{subsec:injective_resolutions}

An object $I$ in $\cat$ is called \emph{injective} if, for every monomorphism $f : A \hookrightarrow B$ and any morphism $g : A \to I$, there exists a morphism $h : B \to I$ making the following diagram commute:
\begin{equation*}
\begin{tikzcd}
A \ar[r, hookrightarrow, "f"] \ar[d, "g"] & B \ar[dl, dashrightarrow, "h"] \\
I &
\end{tikzcd}
\end{equation*}

Arbitrary products of injective objects are injective. Moreover, in an abelian category, finite direct sums (i.e., biproducts) of injective objects remain injective since finite sums coincide with products. However, arbitrary direct sums do not necessarily preserve injectivity.

An abelian category $\cat$ is said to have \emph{enough injectives} if every object $A$ admits a monomorphism $A \mono I$ into an injective object $I$.

An \emph{injective resolution} of an object $X$ in $\cat$ is an exact sequence
\[
0 \to X \hookrightarrow I^0 \to I^1 \to I^2 \to \cdots
\]
where each $I^d$ is an injective object in $\cat$.

If $\cat$ has enough injectives, every object $X$ admits an injective resolution via the standard construction. Begin by embedding $X$ into an injective object $I_0$, and successively embed each cokernel into a new injective object $I_{i+1}$ as follows:
\[
\begin{tikzcd}[column sep = 2em]
X \ar[r, hookrightarrow, "\epsilon_0"]  & I_0 \ar[r, twoheadrightarrow] & \coker \epsilon_0 \ar[r, hookrightarrow, "\epsilon_1"] & I_1 \ar[r, twoheadrightarrow] & \coker \epsilon_1 \ar[r, hookrightarrow, "\epsilon_2"] & I_2 \ar[r, twoheadrightarrow] & \coker \epsilon_2 \ar[r, hookrightarrow, "\epsilon_3"] & \cdots
\end{tikzcd}
\]

\subsection{Grothendieck Ring}
\label{subsec:Grothendieck}

The \emph{Grothendieck ring} of a category $\mathcal{C}$, denoted $\Gr(\mathcal{C})$, is the free abelian group generated by the isomorphism classes $[A]$ of objects $A$ in $\mathcal{C}$, subject to the relation $[B] = [A] + [C]$ for every short exact sequence $0 \to A \to B \to C \to 0$ in $\mathcal{C}$. The ring structure on $\Gr(\mathcal{C})$ is induced by the monoidal structure of $\mathcal{C}$, where the product is given by $[A] \cdot [B] = [A \otimes B]$, with $\otimes$ denoting the tensor product in $\mathcal{C}$. The resulting ring $\Gr(\mathcal{C})$ is commutative and unital, with the class of the unit object~$[\mathbf{1}]$ acting as the multiplicative identity: $[A] \cdot [\mathbf{1}] = [A]$. The multiplication is both commutative and associative, reflecting the symmetric monoidal structure of $\mathcal{C}$.

\section{Poset Modules}
 \label{sec:poset_modules}
In this section, we introduce poset modules, which are functors from a poset (viewed as a category) to the target category $\cat$. We establish key definitions and notation that will be used throughout the paper, focusing on constructions such as cofree modules, enriched hom-objects, tensor products, and injective resolutions.

\subsection{First Constructions}
We begin by defining posets and their interpretation as categories, along with the notion of $P$-modules. Principal cofree modules and cofree modules are introduced as building blocks for working with $P$-modules.

\begin{defn}
A \define{poset} $P$ consists of a set $P$ equipped with a binary relation $\leq$ that is reflexive, antisymmetric, and transitive. 
A poset can be regarded as a category, where the objects are the elements of $P$ and where there is a morphism $a \to b$ whenever $a \leq b$.
\end{defn}

\begin{center}
\textbf{Henceforth, we assume all posets are finite.}
\end{center}

\begin{defn}
A \define{$P$-module} valued in a category $\cat$ is a functor $M: P \to \cat$. Let $\cat^P$ denote the category of $P$-modules, with morphisms given by natural transformations. Denote by $\Nat(M, N)$ the set of morphisms from $M$ to $N$.
\end{defn}

\begin{defn}
For $a \in P$ and $X \in \cat$, define the $P$-module $X^{\downarrow a}$ by
    \[
    X^{\downarrow a}(b) := \begin{cases} 
        X & \text{if } b \leq a, \\
        0 & \text{otherwise}.
    \end{cases}
    \]
The map $X^{\downarrow a}(b \leq c)$ is the identity on $X$ if $b \leq c$, and $0$ otherwise. Such $P$-modules are called \define{principal cofree modules}.
\end{defn}

\begin{defn}
A $P$-module $M$ is \define{cofree} if it is isomorphic to a product of principal cofree modules, i.e., if $M \cong \prod_{a \in P} X_a^{\downarrow a}$ for objects $X_a \in \cat$.
\end{defn}

Having introduced principal cofree modules, we now shift our focus to enriched hom-objects and tensor product constructions for $P$-modules.

\subsection{Enriched Hom}
 \label{sec:hom_objects}
This subsection defines enriched hom-objects and describes how the symmetric monoidal structure extends from the base category to the category of $P$-modules. We introduce tensor products of $P$-modules and establish an adjunction between the enriched hom functor and the tensor product functor.

If $\cat$ is an abelian category, then the category of $P$-modules, $\cat^P$, is also abelian. Moreover, if $(\cat, \otimes, \One)$ is a symmetric monoidal category, then $(\cat^P, \otimes, \One)$ inherits a symmetric monoidal structure as follows: for two $P$-modules $M$ and $N$, their tensor product $M \otimes N$ is given by the composition
\[
\begin{tikzcd}[column sep = 2em]
P \ar[rr, "M \times N"] && \cat \times \cat \ar[rr, "\otimes"] && \cat.
\end{tikzcd}
\]
The unit $P$-module, denoted $\One: P \to \cat$, assigns the object $\One$ to each element of $P$, with identity morphisms acting between them. The coherence conditions for this structure follow directly from those in $\cat$.

While we do not require $\cat^P$ to be closed, we do require it to be enriched over $\cat$. Specifically, for each $P$-module $M$, there must be a functor
\[
\uHom(M, -) : \cat^P \to \cat
\]
that is right adjoint to a functor of the form $- \otimes A : \cat \to \cat^P$, for each object $A \in \cat$.
\begin{defn}\label{def:enriched_hom}
Define the \define{enriched hom-object} between two $P$-modules $M$ and~$N$ as
\[
\uHom(M, N) := \int_{a \in P} \IHom_\cat(M(a), N(a)).
\]
This assignment extends, by the universal property of the end, to a functor
\[
\uHom(-, -) : (\cat^P)^\op \times \cat^P \to \cat.
\]
\end{defn}

Before proving the enriched hom adjunction, we introduce constant $P$-modules.
For an object $A \in \cat$, denote by $A^P : P \to \cat$ as the \emph{$A$-constant} $P$-module.
That is $A^P(a) = A$, for all $a \in P$, all the morphisms are set to the identity morphism on~$A$.

\begin{defn}
Define the \define{tensor product} $- \otimes M : \cat \to \cat^P$ as the functor induced by sending every 
object $A \in \cat$ to the tensor product of $P$-modules $A^P \otimes M$.
\end{defn}

\begin{prop}
\label{prop:enriched_hom}
The tensor product $-\otimes M : \cat \to \cat^P$ is left-adjoint to the 
enriched-hom $\uHom(M,-) : \cat^P \to \cat$. That is, for every object $A \in \cat$ and $P$-modules $M$ and $N$, there is a natural isomorphism of sets
\[
\Nat \big( A^P \otimes M, N \big) \cong \Hom_\cat \big( A, \uHom(M,N) \big).
\]
\end{prop}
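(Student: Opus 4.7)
The plan is to chain three natural isomorphisms together, reducing everything to the tensor–hom adjunction inside $\cat$ and to the continuity of the representable functor $\Hom_\cat(A,-)$.

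First, I would rewrite the left-hand side using the end formula \eqref{eq:end_formula_natural_transformations} for natural transformations. Since $(A^P \otimes M)(a) = A \otimes M(a)$ by definition of $A^P$ and of the pointwise tensor on $\cat^P$, this gives
\[
\Nat(A^P \otimes M, N) \cong \int_{a \in P} \Hom_\cat(A \otimes M(a),\, N(a)).
\]

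Next, for each fixed $a \in P$, I would apply the tensor–hom adjunction of the closed symmetric monoidal structure on $\cat$, which yields the natural isomorphism
\[
\Hom_\cat(A \otimes M(a),\, N(a)) \cong \Hom_\cat\bigl(A,\, \IHom_\cat(M(a), N(a))\bigr).
\]
Checking naturality in $a$ (in the sense required to make this an isomorphism of functors $P^\op \times P \to \mathrm{Set}$) is routine since the adjunction is natural in both variables. Substituting, the right-hand side becomes
\[
\int_{a \in P} \Hom_\cat\bigl(A,\, \IHom_\cat(M(a), N(a))\bigr).
\]

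Finally, I would pull the representable $\Hom_\cat(A, -)$ outside the end. The end is a particular limit (it can be written as an equalizer of two products indexed by objects and morphisms of $P$), and $\Hom_\cat(A,-)$, being representable, preserves all limits that exist in $\cat$. Since $\cat$ is assumed complete, the end $\uHom(M,N) = \int_{a \in P} \IHom_\cat(M(a), N(a))$ exists, and we obtain
\[
\int_{a \in P} \Hom_\cat\bigl(A, \IHom_\cat(M(a), N(a))\bigr) \cong \Hom_\cat\!\left(A,\, \int_{a \in P} \IHom_\cat(M(a), N(a))\right) = \Hom_\cat(A,\uHom(M,N)).
\]
Composing the three isomorphisms gives the claim, with naturality in $A$ and $N$ inherited from each step. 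The only step requiring any real care is the commutation of $\Hom_\cat(A,-)$ with the end; this is where the completeness hypothesis on $\cat$ is used, and it is the main (small) technical point of the proof.
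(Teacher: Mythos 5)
Your proposal is correct and follows exactly the same three-step argument as the paper's proof: express $\Nat(A^P\otimes M,N)$ as an end via \eqref{eq:end_formula_natural_transformations}, apply the hom--tensor adjunction in $\cat$ pointwise, and pull the representable $\Hom_\cat(A,-)$ outside the end using the fact that it preserves limits. Your additional remarks on naturality and on where completeness of $\cat$ enters are consistent with, and slightly more explicit than, the paper's treatment.
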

\begin{proof}
For each $M,N\in \cat^P$ and $A\in \cat$, we have
		\begin{align*} 
			\Nat \big( A^P \otimes M ,N \big) & = \int_{a\in P} \Hom_\cat\big(A\otimes M(a),N(a)\big) && 
			\text{by definition of $A^P \otimes M$ and by \eqref{eq:end_formula_natural_transformations}} \\
			& \cong \int_{a\in P}\Hom_\cat\big(A,\IHom_\cat(M(a),N(a) \big) && \text{by the hom-tensor adjunction in $\cat$}\\
			& \cong \Hom_\cat \left(A,\int_{a\in P} \IHom_\cat \big( M(a),N(a) \big) \right)&& 
			\text{since $\Hom$ commutes with limits} \\
			& \cong\Hom_\cat \big( A,\uHom (M,N) \big).
		\end{align*}
These isomorphisms are natural in $A$, $M$, and $N$ which completes the proof.
\end{proof}

\begin{prop}
\label{prop:cofree_hom}
If $N \cong \prod_{a \in P} X_a^{\downarrow a}$ is a cofree $P$-module and $M$ is any $P$-module, then
\[
\uHom(M, N) \cong \prod_{a \in P} \IHom_\cat \big( M(a), X_a \big).
\]
\end{prop}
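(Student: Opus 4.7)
The plan is a standard two-step reduction: first use that $\uHom(M,-)$ preserves products to reduce the claim to a single principal cofree module $X^{\downarrow a}$, and then compute $\uHom(M, X^{\downarrow a})$ via the Yoneda lemma together with the adjunction of Proposition~\ref{prop:enriched_hom}.

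For the first step, I would invoke Proposition~\ref{prop:enriched_hom} to note that $\uHom(M,-):\cat^P\to\cat$ is a right adjoint and therefore preserves all limits. Since products in $\cat^P$ are computed pointwise, this yields
\[
\uHom\bigl(M,\prod_{a\in P} X_a^{\downarrow a}\bigr)\;\cong\;\prod_{a\in P}\uHom\bigl(M, X_a^{\downarrow a}\bigr),
\]
so it suffices to establish $\uHom(M, X^{\downarrow a})\cong \IHom_\cat(M(a), X)$ for each fixed $a\in P$ and $X\in\cat$.

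For this, by the Yoneda lemma it suffices to give a natural bijection $\Hom_\cat(A,\uHom(M, X^{\downarrow a}))\cong \Hom_\cat(A,\IHom_\cat(M(a), X))$ for all $A\in\cat$. Proposition~\ref{prop:enriched_hom} identifies the left-hand side with $\Nat(A^P\otimes M, X^{\downarrow a})$, while the internal-hom adjunction in $\cat$ identifies the right-hand side with $\Hom_\cat(A\otimes M(a), X)$. So the problem reduces to constructing a natural isomorphism
\[
\Nat\bigl(A^P\otimes M,\, X^{\downarrow a}\bigr)\;\cong\;\Hom_\cat\bigl(A\otimes M(a),\, X\bigr).
\]
The key observation is that a natural transformation $\eta:A^P\otimes M\to X^{\downarrow a}$ is determined entirely by its component $\eta_a$ at $a$: for $b\not\leq a$ the target $X^{\downarrow a}(b)$ is the zero object, forcing $\eta_b=0$, while for $b\leq a$ naturality along the morphism $b\leq a$ together with $X^{\downarrow a}(b\leq a)=\id_X$ forces $\eta_b=\eta_a\circ(\id_A\otimes M(b\leq a))$. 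Conversely, any $f:A\otimes M(a)\to X$ extends to a natural transformation via these same formulas.

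The only non-formal step, and thus the main (mild) obstacle, is verifying that this converse construction is actually natural with respect to arbitrary $b\leq c$ in $P$. This requires a small case analysis: when $c\leq a$ the relevant square commutes by functoriality of $M$ together with the fact that $X^{\downarrow a}(b\leq c)=\id_X$, and when $c\not\leq a$ both sides of the naturality square vanish because $X^{\downarrow a}(c)=0$. Once this is checked, the desired isomorphism is manifestly natural in $A$, and everything else follows formally from Yoneda, the two adjunctions, and preservation of products by the right adjoint $\uHom(M,-)$.
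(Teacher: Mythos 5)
Your proposal is correct, and its first half coincides with the paper's: both reduce to a single principal cofree module $X^{\downarrow a}$ by observing that $\uHom(M,-)$ is a right adjoint (Proposition~\ref{prop:enriched_hom}) and hence preserves products. Where you diverge is in computing $\uHom(M,X^{\downarrow a})$. The paper works directly at the enriched level with the defining end: it exhibits $\IHom_\cat(M(a),X)$, equipped with the maps $\phi_p$ (namely $\IHom_\cat(M(a\leq p),X)$ for $p\leq a$ and $0$ otherwise), as the universal wedge of $\IHom_\cat(M(-),X^{\downarrow a}(-))$. You instead pass to underlying hom-sets: by Yoneda it suffices to identify $\Hom_\cat(A,\uHom(M,X^{\downarrow a}))\cong\Nat(A^P\otimes M,X^{\downarrow a})$ with $\Hom_\cat(A\otimes M(a),X)\cong\Hom_\cat(A,\IHom_\cat(M(a),X))$, and you establish the middle bijection by showing a natural transformation $A^P\otimes M\to X^{\downarrow a}$ is determined by its component at $a$, with the converse extension checked by the same two-case analysis ($c\leq a$ versus $c\not\leq a$) that underlies the paper's wedge verification. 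The content is essentially the same calculation in representable form: your route trades the explicit universal-wedge argument for two applications of the adjunction plus Yoneda, which keeps everything at the level of ordinary hom-sets and natural transformations; the paper's route is more self-contained (no Yoneda detour) and makes the structure maps of the end explicit. One cosmetic remark: the aside that products in $\cat^P$ are computed pointwise is not needed for your reduction — preservation of products by the right adjoint $\uHom(M,-)$ already suffices.
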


\begin{proof}
Since the functor $\uHom(M, -)$ is a right adjoint, it preserves limits, and in particular, it commutes with products. Therefore, it suffices to demonstrate that for every object $X \in \mathcal{C}$ and element $a \in P$, there is an isomorphism
\[
\uHom(M, X^{\downarrow a}) \cong \IHom_\mathcal{C}(M(a), X).
\]

Consider the case where $N = X^{\downarrow a}$. For each pair $b \leq c$ in $P$, examine the following commutative diagram:
\[
\begin{tikzcd}[column sep=1.5em]
\IHom_\mathcal{C}(M(a), X) \ar[r, "\phi_c"] \ar[d, "\phi_b"'] & \IHom_\mathcal{C}(M(c), X^{\downarrow a}(c)) \ar[d] \\
\IHom_\mathcal{C}(M(b), X^{\downarrow a}(b)) \ar[r] & \IHom_\mathcal{C}(M(b), X^{\downarrow a}(c))
\end{tikzcd}
\]
Here, for each $p \in P$, the morphism $\phi_p : \IHom_\mathcal{C}(M(a), X) \to \IHom_\mathcal{C}(M(p), X^{\downarrow a}(p))$ is defined by
\[
\phi_p = 
\begin{cases}
\IHom_\mathcal{C}(M(a \leq p), X) & \text{if } p \leq a, \\
0 & \text{otherwise}.
\end{cases}
\]
It is straightforward to verify that this diagram commutes for every $b \leq c$ in $P$. Consequently, the collection $\left( \IHom_\mathcal{C}(M(a), X), \phi_p \right)_{p \in P}$ forms a wedge for the functor $\IHom_\mathcal{C}(M(-), X^{\downarrow a}(-))$.

Now, let $L$ be any other such wedge with morphisms $e_p : L \to \IHom_\mathcal{C}(M(p), X^{\downarrow a}(p))$ for each $p \in P$. In particular, there is a morphism $e_a : L \to \IHom_\mathcal{C}(M(a), X)$ corresponding to $p = a$. It can be checked that for every $p \in P$, the following equality holds:
$\phi_p \circ e_a = e_p.$
This demonstrates that the morphisms $e_p$ uniquely factor through $\IHom_\mathcal{C}(M(a), X)$. Therefore, $\IHom_\mathcal{C}(M(a), X)$ is the universal wedge. 
\end{proof}

\subsection{Injective Resolutions}

We define principal injective modules and elementary injective modules, which are necessary for constructing injective resolutions for $P$-modules. This section parallels the theory of injective resolutions in the underlying abelian category.

\begin{defn}
For an injective object $Q \in \cat$ and $c \in P$, define the $P$-module~$Q^{\downarrow c}$ by:
\[
Q^{\downarrow c}(b) :=
\begin{cases}
Q & \text{if } b \leq c, \\
0 & \text{otherwise}.
\end{cases}
\]
The map $Q^{\downarrow c}(a \leq b)$ is the identity on $Q$ for $b \leq c$, and $0$ otherwise. Such $P$-modules are called \define{principal injective} $P$-modules.
\end{defn}

\begin{defn}
A $P$-module $M$ is called an \define{elementary injective} $P$-module if it is isomorphic to a product of principal injective $P$-modules. That is, $M \cong \prod_{a \in P} Q_a^{\downarrow a}$, where each $Q_a$ is an injective object in $\cat$.
\end{defn}

Clearly, principal injective modules are principal cofree modules, and elementary injective modules are cofree.
With the concept of elementary injective modules in hand, we now establish that every elementary injective module is injective in the category of $P$-modules.

\begin{prop}
\label{prop:elementary_injective}
Every elementary injective $P$-module is injective in $\cat^P$.
\end{prop}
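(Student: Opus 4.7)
The plan is to reduce the statement to the principal case and then exploit the natural isomorphism from Proposition~\ref{prop:cofree_hom}. Since arbitrary products of injective objects are injective (the lifting property for a product is witnessed componentwise), it suffices to show that each principal injective $P$-module $Q^{\downarrow a}$ is injective in $\cat^P$, where $Q$ is an injective object of $\cat$. Injectivity of $Q^{\downarrow a}$ is equivalent to the contravariant functor $\Nat(-, Q^{\downarrow a}) : (\cat^P)^\op \to \mathrm{Set}$ (or to abelian groups, using the preadditive structure) sending monomorphisms to surjections.

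First I would establish the key identification
\[
\Nat(M, Q^{\downarrow a}) \cong \Hom_\cat(M(a), Q),
\]
natural in $M$. This can be obtained by combining Proposition~\ref{prop:cofree_hom} (which yields $\uHom(M, Q^{\downarrow a}) \cong \IHom_\cat(M(a), Q)$), Proposition~\ref{prop:enriched_hom} applied with $A = \One$ (which recovers $\Nat(M, N)$ from $\uHom(M, N)$), and Proposition~\ref{prop:hom_one}. Alternatively, one can give a direct unenriched argument: a natural transformation $\eta : M \to Q^{\downarrow a}$ must vanish on all $b \not\leq a$, and naturality for $b \leq a$ forces $\eta_b = \eta_a \circ M(b \leq a)$, so $\eta$ is uniquely determined by $\eta_a \in \Hom_\cat(M(a), Q)$, and any such morphism extends to a natural transformation by this formula.

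Next, observe that the evaluation functor $\ev_a : \cat^P \to \cat$, $M \mapsto M(a)$, is exact because limits and colimits (hence kernels, cokernels, and monomorphisms) in $\cat^P$ are computed pointwise. Consequently, if $\iota : M \hookrightarrow N$ is a monomorphism in $\cat^P$, then $M(a) \hookrightarrow N(a)$ is a monomorphism in $\cat$. Given any morphism $\eta : M \to Q^{\downarrow a}$, the natural isomorphism above identifies $\eta$ with a morphism $\eta_a : M(a) \to Q$; injectivity of $Q$ in $\cat$ yields an extension $\tilde\eta_a : N(a) \to Q$, and transporting back produces the required extension $\tilde\eta : N \to Q^{\downarrow a}$ with $\tilde\eta \circ \iota = \eta$.

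I do not expect a serious obstacle: once monomorphisms are recognized as pointwise and the hom-identification is in place, the lifting property is essentially inherited from the base category. The only point requiring mild care is making the natural isomorphism $\Nat(M, Q^{\downarrow a}) \cong \Hom_\cat(M(a), Q)$ genuinely natural in $M$, so that a chosen extension $\tilde\eta_a$ really does correspond to a natural transformation extending the original $\eta$; both routes above handle this cleanly.
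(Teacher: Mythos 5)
Your proposal is correct and takes essentially the same route as the paper: reduce to a principal injective $Q^{\downarrow a}$ via products, use that monomorphisms in $\cat^P$ are pointwise, lift at $a$ using injectivity of $Q$ in $\cat$, and extend by precomposing with the structure maps $N(b \leq a)$. Packaging this as the natural isomorphism $\Nat(M, Q^{\downarrow a}) \cong \Hom_\cat(M(a), Q)$ is just a cleaner wrapper around the same extension the paper constructs by hand.
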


\begin{proof}
Since products of injective objects are injective, it suffices to prove the proposition for a principal injective module 
$Q^{\downarrow b}$. 
Let $M$ and $N$ be $P$-modules, $\phi : M \hookrightarrow N$ a monomorphism, and $\psi : M \to Q^{\downarrow b}$ any morphism.
We construct a map of $P$-modules $\mu : N \to Q^{\downarrow b}$ such that $\phi \circ \mu = \psi$. 
Since $Q^{\downarrow b}(b) = Q$ is injective, there exists a map $\tilde{\mu}_b:N(b)\to Q$ in $\cat$ such that the diagram
\[
\begin{tikzcd}
M(b) \ar[r, hookrightarrow, "\phi_b"] \ar[d, "\psi_b"] & N(b) \ar[ld, dashrightarrow, "\tilde{\mu}_b"]\\
Q && 
\end{tikzcd}
\]
commutes. Define $\mu: N \to Q^{\downarrow b}$ by setting $\mu_b = \tilde{\mu}_b$ and then extending to all of $P$ as follows:
\[
\mu_a := \begin{cases} 
\mu_b \circ N(a \leq b) & \textup{if } a \leq b, \\
0 & \textup{otherwise}.
\end{cases}
\]
It is straightforward to verify that $\mu$ is a natural transformation and that $\phi \circ \mu = \psi$.
\end{proof}

\subsection{Enough Injectives}

We show that if the base category has enough injectives, then the category of $P$-modules inherits this property, enabling us to construct injective resolutions for all $P$-modules.

\begin{prop}
\label{prop:enough_injectives_modules}
If $\cat$ has enough injectives, then the category $\cat^P$ has enough injectives.
\end{prop}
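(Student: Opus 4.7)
The plan is to construct, for any $P$-module $M$, an embedding into an elementary injective $P$-module using the injectives provided by $\cat$. By Proposition~\ref{prop:elementary_injective}, this will suffice.

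First, for each $a \in P$ use the hypothesis that $\cat$ has enough injectives to choose a monomorphism $\iota_a : M(a) \hookrightarrow Q_a$ with $Q_a$ injective in $\cat$. Form the elementary injective $P$-module
\[
I := \prod_{a \in P} Q_a^{\downarrow a},
\]
which is injective in $\cat^P$ by Proposition~\ref{prop:elementary_injective}.

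Next, I would build a monomorphism $\phi : M \hookrightarrow I$ by specifying, for each $a \in P$, a natural transformation $\phi^{(a)} : M \to Q_a^{\downarrow a}$ and then pairing them via the universal property of the product. For $b \in P$, define
\[
\phi^{(a)}_b := \begin{cases} \iota_a \circ M(b \leq a) & \text{if } b \leq a, \\ 0 & \text{otherwise.} \end{cases}
\]
Naturality in $b \leq c$ is a straightforward case check (split on whether $c \leq a$, $b \leq a < c$, or $b \not\leq a$) using functoriality of $M$ together with the defining morphisms of $Q_a^{\downarrow a}$. Alternatively, and more efficiently, one can obtain $\phi^{(a)}$ directly from $\iota_a \in \Hom_\cat(M(a), Q_a) \cong \Nat(M, Q_a^{\downarrow a})$ using the computation of $\uHom(M, Q_a^{\downarrow a})$ from Proposition~\ref{prop:cofree_hom} (together with $\Nat(M,N) \cong \Hom_\cat(\One, \uHom(M,N))$ and Proposition~\ref{prop:hom_one}).

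Finally, I would verify that the resulting $\phi : M \to I$ is a monomorphism in $\cat^P$. Since monomorphisms in a functor category into an abelian category are detected pointwise, it suffices to check that each $\phi_b : M(b) \to \prod_{a \geq b} Q_a$ is a monomorphism in $\cat$. The projection onto the $a = b$ factor equals $\iota_b \circ M(b \leq b) = \iota_b$, which is a monomorphism; hence $\phi_b$ is too (if $\phi_b \circ f = \phi_b \circ g$, post-composing with the $b$-projection forces $\iota_b \circ f = \iota_b \circ g$, so $f = g$). The main subtlety is nothing more than this componentwise-mono argument; the construction itself is essentially forced once one uses the adjunction between evaluation at $a$ and the principal cofree functor, which is already implicit in Proposition~\ref{prop:cofree_hom}.
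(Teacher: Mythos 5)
Your proof is correct and follows essentially the same route as the paper: embed each $M(a)$ into an injective $Q_a$, assemble the elementary injective $\prod_{a\in P} Q_a^{\downarrow a}$, define the componentwise maps $M \to Q_a^{\downarrow a}$ via $\iota_a \circ M(b\leq a)$, and combine them by the universal property of the product. The only difference is that you spell out the pointwise-monomorphism check (projecting onto the $a=b$ factor recovers $\iota_b$), which the paper leaves implicit with its remark that $j_b(b)$ is a monomorphism.
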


\begin{proof}
We show that $\cat^P$ has enough injectives by constructing, for a fixed $P$-module $M$, an injective $P$-module $Q$ 
and a monomorphism $j : M \mono Q$.

Since $\cat$ has enough injectives, for each $a \in P$, there exists an injective object $Q_a \in \cat$ and a monomorphism 
$i_a: M(a) \mono Q_a$. Set 
\[
Q := \prod_{a \in P} Q_a^{\downarrow a}.
\]
By Proposition \ref{prop:elementary_injective}, $Q$ is an elementary injective $P$-module and hence is injective.

For each $b \in P$, define a map $j_b : M \to Q^{\downarrow b}$ by
\[
j_b(a) = 
\begin{cases}
i_b \circ M(a \leq b) & \text{if } a \leq b, \\
0 & \text{otherwise}.
\end{cases}
\]
It is straightforward to verify that each $j_b$ is a natural transformation. Note that $j_b(b)$ is a monomorphism. 
Finally, the desired map $j : M \mono Q$ is obtained using the universal property of the product.
\end{proof}

By Proposition \ref{prop:enough_injectives_modules}, every $P$-module admits a monomorphism into an elementary injective module, leading to the following corollary.

\begin{cor}
If $\cat$ is an abelian category with enough injectives, then every $P$-module $M \in \cat^P$ has an injective resolution by elementary injective $P$-modules.
\end{cor}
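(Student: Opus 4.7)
The plan is to mimic the standard inductive construction of injective resolutions from Subsection~\ref{subsec:injective_resolutions} in the category of $P$-modules, using Proposition~\ref{prop:enough_injectives_modules} at each step to ensure the injective objects produced are not merely injective but in fact elementary injective. The key observation is that the proof of Proposition~\ref{prop:enough_injectives_modules} does more than just establish the existence of enough injectives: it shows that every $P$-module admits a monomorphism into an \emph{elementary} injective $P$-module. Combined with the fact that $\cat^P$ is abelian (so that cokernels of morphisms of $P$-modules exist), this is exactly what is needed to iterate.

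First, I would set $M^{-1}:=M$ and apply Proposition~\ref{prop:enough_injectives_modules} to obtain a monomorphism $\epsilon^0:M\mono Q^0$ into an elementary injective $P$-module $Q^0=\prod_{a\in P} Q^0_a{}^{\downarrow a}$. Next, form the cokernel $M^0:=\coker \epsilon^0$ in $\cat^P$ and apply Proposition~\ref{prop:enough_injectives_modules} again to embed $M^0\mono Q^1$ into an elementary injective $P$-module $Q^1$. Define $d^0:Q^0\to Q^1$ as the composition $Q^0\twoheadrightarrow M^0\mono Q^1$. Inductively, having constructed $d^{i-1}:Q^{i-1}\to Q^i$, set $M^i:=\coker d^{i-1}$, embed it into an elementary injective $Q^{i+1}$, and let $d^i:Q^i\to Q^{i+1}$ be the composition $Q^i\twoheadrightarrow M^i\mono Q^{i+1}$.

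Finally, I would verify exactness of the resulting sequence
\[
0\to M\stackrel{\epsilon^0}{\mono} Q^0\stackrel{d^0}{\to} Q^1\stackrel{d^1}{\to} Q^2\to\cdots
\]
at each spot. Exactness at $M$ follows because $\epsilon^0$ is a monomorphism; exactness at $Q^0$ follows because $\im \epsilon^0 = \ker(Q^0\twoheadrightarrow M^0) = \ker d^0$ (the second equality uses that $M^0\mono Q^{1}$ is mono); and exactness at $Q^i$ for $i\ge 1$ follows identically from the factorization $d^i = (M^i\mono Q^{i+1})\circ(Q^i\twoheadrightarrow M^i)$ together with the fact that the second factor is epi and the first is mono, so that $\ker d^i = \ker(Q^i\twoheadrightarrow M^i) = \im d^{i-1}$.

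I do not anticipate any real obstacle: this is the verbatim construction of Subsection~\ref{subsec:injective_resolutions} carried out in the abelian category $\cat^P$, and the only new input needed, namely that the injective objects chosen at each stage may be taken to be elementary injective, is supplied by Proposition~\ref{prop:enough_injectives_modules}. The proof can accordingly be given in one or two lines, essentially by saying ``apply the standard construction in $\cat^P$, using Proposition~\ref{prop:enough_injectives_modules} in place of the hypothesis that $\cat$ has enough injectives.''
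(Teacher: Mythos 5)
Your proof is correct and is exactly the argument the paper intends: it invokes Proposition~\ref{prop:enough_injectives_modules} (which embeds any $P$-module into an \emph{elementary} injective module) and then runs the standard iterated cokernel-and-embed construction of Subsection~\ref{subsec:injective_resolutions} inside the abelian category $\cat^P$. No gaps; the exactness verification via kernel-of-cokernel equals image is the standard abelian-category fact and is fine.
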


\section{Derived Hom Functors}
\label{sec:derived_hom}
In this section, we introduce the Ext-functor as a derived functor of the enriched hom. 
Next, we introduce cofree resolutions as a computational tool. 
Finally, we explore the Euler characteristic of Ext-functors.

\subsection{Ext Functors and Injective Resolutions}
This subsection introduces Ext-functors, which arise as derived functors of the enriched hom.

Recall Proposition~\ref{prop:enriched_hom}: for a fixed $P$-module $M$, the tensor product functor 
\[
- \otimes M : \cat \to \cat^P 
\]
is left adjoint to the enriched hom functor 
\[
\uHom(M, -) : \cat^P \to \cat.
\] 
This adjunction makes $\uHom(M, -)$ a left-exact functor between the two abelian categories.

Now, let $N$ be a second $P$-module. Consider an injective resolution
\[
0 \to N \hookrightarrow I^0 \to I^1 \to I^2 \to \cdots,
\]
and apply the functor $\uHom(M, -)$ to obtain the following cochain complex in $\cat$:
\[
0\to \uHom(M, I^0) \to \uHom(M, I^1) \to \uHom(M, I^2) \to \cdots.
\]
The cohomology objects of this complex, denoted by $R^d \uHom(M, N)$, are the higher derived functors of the enriched hom and are commonly referred to as \emph{Ext-functors}. These are denoted by
\[
\uExt^d(M, N).
\]

We are particularly interested in the Ext-functor for a specific class of indicator $P$-modules $M$, which we now describe.

\begin{defn}
A subposet $Z \subset P$ is called a \define{spread} if, for all $a \leq c$ in $Z$ and for all $a \leq b \leq c$ in $P$, we have $b \in Z$. Define the $P$-module $\One_Z : P \to \cat$ as follows:
\[
\One_Z(b) :=
\begin{cases}
\One & \text{if } b \in Z, \\
0 & \text{otherwise}.
\end{cases}
\]
Additionally, the morphism $\One_Z(a \leq b)$ is the identity morphism for $a \leq b$ in $Z$ and zero otherwise.
\end{defn}

As a direct consequence of Propositions~\ref{prop:hom_one} and~\ref{prop:cofree_hom}, we have the following corollary.

\begin{cor}
\label{cor:spread_hom}
For a spread $Z \subset P$ and a cofree $P$-module $N \cong \prod_{a \in P} X_a^{\downarrow a}$, we have
\[
\uHom (\One_Z, N) \cong \prod_{a \in Z} X_a.
\]
\end{cor}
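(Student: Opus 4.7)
The plan is to instantiate Proposition~\ref{prop:cofree_hom} with $M = \One_Z$ and then evaluate the resulting product factor-by-factor using Proposition~\ref{prop:hom_one}, together with the elementary observation that $\IHom_\cat(0,-) \cong 0$.

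First I would write down the identification supplied by Proposition~\ref{prop:cofree_hom}:
\[
\uHom(\One_Z, N) \;\cong\; \prod_{a \in P} \IHom_\cat\bigl(\One_Z(a), X_a\bigr).
\]
Next I would split this product according to whether $a \in Z$ or not. For $a \in Z$ we have $\One_Z(a) = \One$, and Proposition~\ref{prop:hom_one} gives $\IHom_\cat(\One, X_a) \cong X_a$. For $a \notin Z$ we have $\One_Z(a) = 0$, so we need $\IHom_\cat(0, X_a) \cong 0$.

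The only step that is not literally a citation is this last identification, and it is the place I would spend the most care. The argument is a short Yoneda computation: for any $A \in \cat$, the hom-tensor adjunction gives
\[
\Hom_\cat\bigl(A, \IHom_\cat(0, X_a)\bigr) \;\cong\; \Hom_\cat(A \otimes 0, X_a) \;\cong\; \Hom_\cat(0, X_a),
\]
using that $-\otimes A$ is cocontinuous (being a left adjoint) and hence sends the initial object to the initial object. The right-hand side is a singleton for every $A$, so $\IHom_\cat(0, X_a)$ represents the terminal functor on $\cat$; since $\cat$ is abelian, the terminal object is the zero object, and therefore $\IHom_\cat(0, X_a) \cong 0$.

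Combining these computations, the factors with $a \notin Z$ are all zero and drop out of the product, leaving
\[
\uHom(\One_Z, N) \;\cong\; \prod_{a \in Z} X_a,
\]
as claimed. No additional use of the spread hypothesis is required at this stage beyond its role in making $\One_Z$ well-defined as a $P$-module.
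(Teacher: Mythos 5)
Your proposal is correct and follows the same route the paper intends: the corollary is stated there as a direct consequence of Propositions~\ref{prop:hom_one} and~\ref{prop:cofree_hom}, exactly the two ingredients you use. The only thing you add is the explicit Yoneda/adjunction argument that $\IHom_\cat(0,X_a)\cong 0$, a detail the paper leaves implicit, and your justification of it is sound.
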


\begin{center}
\textbf{Henceforth, all Ext-functors will be of the form $\uExt^\ast (\One_Z, N)$, where $Z \subset P$ is a spread.}
\end{center}

\subsection{Standard Cofree Resolutions and Ext Calculations}
This subsection focuses on calculating Ext-functors using cofree resolutions. 
While injective resolutions are theoretically useful for defining the Ext-functor, cofree resolutions offer computational advantages, particularly for explicit calculations. 

A \emph{cofree resolution} of a $P$-module $N$ is an exact sequence in $\cat^P$:
\[
0 \to N \to F^0 \to F^1 \to F^2 \to \cdots
\]
where each $F^d$ is a cofree $P$-module. A specific cofree resolution, called the \emph{standard cofree resolution}, is particularly useful for computations.

\begin{defn}
The \define{order complex} of a poset $P$, denoted by $\Delta P$, is a simplicial complex whose $n$-simplices are chains $\sigma = a_0 < \cdots < a_n$ consisting of $n+1$ distinct elements of $P$. A simplex $\sigma$ is a \define{face} of a simplex $\tau$, written $\sigma \unlhd \tau$, if $\sigma$ is a subchain of $\tau$.
\end{defn}

We use $\sigma \lhd_i \tau$ to indicate that $\dim \tau - \dim \sigma = i$. For a simplex $\tau = a_0 < \cdots < a_d$, we denote $\min \tau$ as $a_0$ and $\max \tau$ as $a_d$. If $\sigma \unlhd \tau$, then $\min \tau \leq \min \sigma$ and $\max \sigma \leq \max \tau$. 

\begin{defn}
The \define{standard cofree resolution} of a $P$-module $N$ is the exact sequence:
\begin{equation}
\label{eq:cofre_res}
\begin{tikzcd}[column sep = 2em]
0 \ar[r] & N \ar[r, hook,"\epsilon"] & F^0 N \ar[r, "\delta^0"] & F^1 N \ar[r, "\delta^1"] & \cdots
\end{tikzcd}
\end{equation}
Define $F^d N : P \to \cat$ as
\[
F^d N := \prod_{\sigma \in \Delta P : \dim \sigma = d} N(\max \sigma)^{\downarrow \min \sigma}.
\]
For $\sigma \lhd_1 \tau$, define the map
\[
\delta^d |_{\sigma \lhd_1 \tau}: N(\max \sigma)^{\downarrow \min \sigma} \to N(\max \tau)^{\downarrow \min \tau}
\]
as
\[
\delta^d |_{\sigma \lhd_1 \tau}(a) = 
\begin{cases}
[\tau : \sigma] \cdot N(\max \sigma \leq \max \tau) & \textup{if } a \leq \min \sigma, \\
0 & \textup{otherwise}.
\end{cases}
\]
The universal property of the product induces the full coboundary morphism $\delta^d : F^d N \to F^{d+1} N$.

The canonical monomorphism $\epsilon: N \mono F^0 N$ is defined as follows. For a $0$-simplex $\sigma \in \Delta P$ (i.e., an element $b = \max \sigma = \min \sigma$), define the component
\[
\epsilon |_\sigma : N \to N(\max \sigma)^{\downarrow \min \sigma}
\]
as
\[
\epsilon |_\sigma (a) = 
\begin{cases}
N(a \leq \max \sigma) & \textup{if } a \leq \min \sigma, \\
0 & \textup{otherwise}.
\end{cases}
\]
The universal property of the product induces the full monomorphism~$\epsilon$.
\end{defn}

\begin{prop}[\cite{Deheuvels1962}]
The standard cofree resolution of a $P$-module is exact.
\end{prop}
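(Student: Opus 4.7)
The plan is to reduce exactness of~\eqref{eq:cofre_res} to a pointwise statement and then verify the pointwise statement by writing down an explicit contracting homotopy. Since kernels, cokernels, and products in $\cat^P$ are computed pointwise, it suffices to show that for every $a \in P$ the evaluated complex
\[
0 \to N(a) \to F^0 N(a) \to F^1 N(a) \to \cdots
\]
is exact in $\cat$. Using $N(\max\sigma)^{\downarrow \min\sigma}(a) = N(\max\sigma)$ if $a \le \min\sigma$ and $0$ otherwise, we obtain
\[
F^d N(a) \cong \prod_{\sigma \in \Delta(P_{\ge a}),\,\dim\sigma = d} N(\max\sigma),
\]
which may be recognized as the cochain complex of the order complex of $P_{\ge a}$ with coefficients twisted by $N$. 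Since $a$ is the unique minimum of $P_{\ge a}$, this order complex is a simplicial cone with apex $a$, so one expects the evaluated complex to be acyclic; the task is to realize this acyclicity concretely.

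Next, I would write down the standard cone-style contracting homotopy. For $d \ge 1$ and a $(d-1)$-simplex $\rho = c_0 < \cdots < c_{d-1}$ with $a \le c_0$, define $s^d : F^d N(a) \to F^{d-1} N(a)$ componentwise by
\[
s^d(x)_\rho := \begin{cases} x_{a \ast \rho} & \text{if } a < c_0,\\ 0 & \text{if } a = c_0, \end{cases}
\]
where $a \ast \rho$ denotes the $d$-simplex obtained by prepending $a$ to $\rho$, and define $s^0 : F^0 N(a) \to N(a)$ to be the projection onto the coordinate indexed by the $0$-simplex $a$. The claim is that
\[
s^0\,\epsilon(a) = \id_{N(a)}, \qquad \epsilon(a)\,s^0 + s^1\,\delta^0(a) = \id_{F^0 N(a)}, \qquad \delta^{d-1}(a)\,s^d + s^{d+1}\,\delta^d(a) = \id_{F^d N(a)}
\]
for $d \ge 1$, which together immediately yield injectivity of $\epsilon(a)$ and vanishing cohomology of the augmented complex, i.e., exactness.

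The main obstacle is a careful verification of the third identity; while routine, it is notationally delicate. Fix a $d$-simplex $\sigma = b_0 < \cdots < b_d$ with $a \le b_0$. When $a = b_0$, the term $s^{d+1}\delta^d(x)_\sigma$ vanishes, and in $\delta^{d-1}(a)\,s^d(x)_\sigma$ every face $\sigma \setminus b_i$ with $i \ge 1$ has minimum $a$ and therefore contributes zero under $s^d$, leaving only the $i = 0$ term, which recovers $x_\sigma$. When $a < b_0$, one expands $\delta^d$ applied to $a \ast \sigma$ via the signed face sum, reindexes by prepending, and matches the resulting terms against $\delta^{d-1}\,s^d(x)_\sigma$; the interior terms cancel in pairs by the standard simplicial identity, and the sole twist term—arising from the unique face whose maximum is not $b_d$, namely the removal of the top vertex—contributes with opposite signs in $s^{d+1}\delta^d$ and $\delta^{d-1}s^d$, because prepending $a$ shifts every vertex index by one. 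Hence this twist term also cancels, leaving $x_\sigma$ as required. The remaining two identities are verified by an analogous direct computation.
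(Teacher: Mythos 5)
Your proof is correct, and it is worth noting that the paper itself offers no argument for this proposition at all --- it simply cites Deheuvels --- so what you have written is a self-contained proof where the paper defers to the literature. Your route is the natural one: exactness in $\cat^P$ is a pointwise condition, the evaluation of \eqref{eq:cofre_res} at $a$ is the augmented cochain complex of the order complex of $P_{\geq a}$ with simplex-dependent coefficients $N(\max\sigma)$, and since $a$ is the minimum of $P_{\geq a}$ the cone homotopy $s^d(x)_\rho = x_{a \ast \rho}$ contracts it. Two small points deserve to be made explicit if you write this up. First, the homotopy is well-typed only because $\max(a \ast \rho) = \max\rho$, so that $x_{a\ast\rho}$ lies in the correct factor $N(\max\rho)$; this is exactly why the twisted coefficients (which would otherwise obstruct a purely topological cone argument) cause no trouble, and it also explains why the ``twist'' term you isolate --- removal of the top vertex, the only face map carrying a nontrivial structure morphism $N(\max\sigma' \leq \max\tau)$ --- cancels cleanly: it appears in $s\delta$ and $\delta s$ with the same coefficient morphism and opposite incidence signs $(-1)^{d+1}$ and $(-1)^{d}$. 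Second, your unsigned $s$ verifies the identities under the convention $[\tau:\sigma] = (-1)^i$ when $\sigma$ omits the $i$-th vertex of $\tau$ (the paper leaves $[\tau:\sigma]$ unspecified); with a different incidence convention one would insert a corresponding sign into $s$. Finally, since $\cat$ is a general abelian category, the componentwise manipulations with ``elements'' $x_\sigma$ should be read as identities among morphisms into the product factors (or checked on generalized elements), which is routine. With those remarks, the argument is complete: the identities $s^0\epsilon(a)=\id$, $\epsilon(a)s^0 + s^1\delta^0(a) = \id$, and $\delta^{d-1}(a)s^d + s^{d+1}\delta^d(a) = \id$ make the augmented complex null-homotopic, hence exact, for every $a$, which is exactly the statement.
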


%

Now consider an elementary cofree $P$-module $X_a^{\downarrow a}$. Since $\cat$ has enough injectives, $X_a$ admits an injective resolution:
\[
0 \to X_a \hookrightarrow I^0_a \to I^1_a \to I^2_a \to \cdots,
\]
which induces an injective resolution of the elementary $P$-module:
\[
0 \to X_a^{\downarrow a} \hookrightarrow (I^0_a)^{\downarrow a} \to (I^1_a)^{\downarrow a} \to (I^2_a)^{\downarrow a} \to \cdots.
\]

Applying the functor $\uHom(\One_Z, -)$, and using Propositions~\ref{prop:cofree_hom} and~\ref{prop:hom_one}, we obtain, assuming $a \in Z$, the following acyclic cochain complex:
\begin{equation}
\label{eq:mobius_cofree}
0\to I^0_a \to I^1_a \to I^2_a \to \cdots
\end{equation}
If $a \notin Z$, the resulting chain complex is zero.
Thus, $\uExt^d(\One_Z, X_a^{\downarrow a}) = 0$ for all $d > 0$. In other words, elementary cofree $P$-modules, and hence cofree $P$-modules in general, are $\uHom(\One_Z, -)$-acyclic, leading to the following proposition.

\begin{prop}{\cite[Theorem III.6.16]{GelfandManin2003}}
\label{prop:co_free}
Let $0 \to N \to F^0 N \to F^1 N \to F^2 N \to \cdots$ be the standard cofree resolution of a $P$-module $N$, and let
$0 \to N \to I^0 \to I^1 \to I^2 \to \cdots$
be any injective resolution of $N$. 
For all spreads $Z \subset P$, the two cochain complexes
\[
0 \to\uHom(\One_Z, I^0) \to \uHom(\One_Z, I^1) \to \uHom(\One_Z, I^2) \to \cdots
\]
and
\[
0\to \uHom(\One_Z, F^0 N) \to \uHom(\One_Z, F^1 N) \to \uHom(\One_Z, F^2 N) \to \cdots
\]
are quasi-isomorphic, i.e., their cohomology objects are isomorphic.
\end{prop}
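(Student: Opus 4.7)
The statement is an instance of the standard acyclic resolution theorem in homological algebra: the derived functors of a left-exact functor can be computed using any resolution by acyclic objects, not merely by injective ones. The crucial input — that cofree $P$-modules are $\uHom(\One_Z,-)$-acyclic — has already been established in the discussion immediately preceding the proposition via the acyclic complex \eqref{eq:mobius_cofree}. Given this, my plan is to prove directly that $H^d\bigl(\uHom(\One_Z, F^\bullet N)\bigr) \cong \uExt^d(\One_Z, N)$ for every $d \geq 0$, which by definition is $H^d\bigl(\uHom(\One_Z, I^\bullet)\bigr)$.

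The main tool is dimension shifting. I would introduce the cocycle modules $K^d := \ker \delta^d$ in $\cat^P$ for $d \geq 0$. Exactness of the standard cofree resolution gives $K^0 \cong N$ and short exact sequences
\begin{equation*}
0 \to K^d \to F^d N \to K^{d+1} \to 0.
\end{equation*}
Applying $\uExt^*(\One_Z,-)$ and using the acyclicity $\uExt^n(\One_Z, F^d N) = 0$ for $n > 0$, the long exact sequence collapses to isomorphisms $\uExt^n(\One_Z, K^{d+1}) \cong \uExt^{n+1}(\One_Z, K^d)$ for $n \geq 1$. Iterating yields $\uExt^d(\One_Z, N) \cong \uExt^1(\One_Z, K^{d-1})$ for all $d \geq 1$.

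To match this with the cohomology of $\uHom(\One_Z, F^\bullet N)$, I would use left-exactness of $\uHom(\One_Z,-)$: the monomorphism $K^{d+1} \hookrightarrow F^{d+1} N$ identifies $\ker\bigl(\uHom(\One_Z, F^d N) \to \uHom(\One_Z, F^{d+1} N)\bigr)$ canonically with $\uHom(\One_Z, K^d)$. The low-degree portion of the long exact sequence attached to $0 \to K^{d-1} \to F^{d-1} N \to K^d \to 0$ then reads
\begin{equation*}
\uHom(\One_Z, F^{d-1} N) \to \uHom(\One_Z, K^d) \to \uExt^1(\One_Z, K^{d-1}) \to 0,
\end{equation*}
which identifies $H^d\bigl(\uHom(\One_Z, F^\bullet N)\bigr)$ with $\uExt^1(\One_Z, K^{d-1}) \cong \uExt^d(\One_Z, N)$ for $d \geq 1$; in degree zero both cohomologies reduce to $\uHom(\One_Z, N)$ by left-exactness.

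No genuine obstacle arises once acyclicity of cofree modules is in hand; the argument is formal. The only point demanding care is canonicity: one should check that the isomorphisms produced by dimension shifting coincide with the map on cohomology induced by a chain comparison $F^\bullet N \to I^\bullet$ lifting $\id_N$ (which exists and is unique up to homotopy because $I^\bullet$ is a complex of injectives). This follows from the naturality of the long exact sequences in $\uExt^*(\One_Z,-)$, so the resulting quasi-isomorphism is independent of choices.
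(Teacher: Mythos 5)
Your proof is correct and is essentially the paper's approach: the paper establishes the key input (cofree $P$-modules are $\uHom(\One_Z,-)$-acyclic) in the discussion immediately preceding the proposition and then simply cites the acyclic-resolution theorem of \cite{GelfandManin2003}, and your dimension-shifting argument is precisely the standard proof of that cited theorem from the same acyclicity input. Nothing is missing; your final remark on canonicity of the comparison map is more than the statement requires, since only an isomorphism of cohomology objects is claimed.
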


\subsection{Euler Characteristic of Ext-Functors}
Having established how Ext-functors are computed using cofree resolutions, we now define the Euler characteristic as an element of the Grothendieck ring. 

\begin{defn}
    The \define{Euler characteristic} of the Ext-functor $\uExt^\ast(\One_Z, N)$ is defined as:
    \[
    \chi(\One_Z, N) := \sum_{d > 0} (-1)^d \big[ \uExt^d(\One_Z, N) \big].
    \]
\end{defn}

The following proposition gives an explicit formula for computing $\chi(\One_Z, N)$.

\begin{prop}
\label{prop:euler_indicator}
For a spread $Z \subset P$ and a $P$-module $N$, the Euler characteristic is given by:
\[
\chi(\One_Z, N) = \sum_{d \geq 0} (-1)^d \sum_{\substack{\sigma \in \Delta P \\ \dim \sigma = d \\ \min \sigma \in Z}} \left[ N(\max \sigma) \right].
\]
\end{prop}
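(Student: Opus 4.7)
The plan is to evaluate the derived functors via the standard cofree resolution rather than an injective resolution, compute each term of the resulting cochain complex explicitly using Corollary~\ref{cor:spread_hom}, and then invoke the standard Euler characteristic identity for bounded complexes in an abelian category.

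First I would apply the enriched hom functor $\uHom(\One_Z, -)$ to the standard cofree resolution \eqref{eq:cofre_res} of $N$, producing a cochain complex in $\cat$ of the form
\[
0 \to \uHom(\One_Z, F^0 N) \to \uHom(\One_Z, F^1 N) \to \uHom(\One_Z, F^2 N) \to \cdots .
\]
By Proposition~\ref{prop:co_free}, this complex is quasi-isomorphic to the one obtained from any injective resolution of $N$, so its cohomology objects are exactly the $\uExt^d(\One_Z, N)$. Because $P$ is finite, $\Delta P$ is finite-dimensional, hence $F^d N = 0$ for $d$ sufficiently large, and the complex is bounded.

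Next, I would compute each term explicitly. Since $F^d N = \prod_{\dim\sigma = d} N(\max\sigma)^{\downarrow \min\sigma}$ is a (finite) product of principal cofree modules, Corollary~\ref{cor:spread_hom} yields
\[
\uHom(\One_Z, F^d N) \;\cong\; \prod_{\substack{\sigma \in \Delta P \\ \dim\sigma = d \\ \min\sigma \in Z}} N(\max\sigma).
\]
As this is a finite product in an abelian category, it is also a biproduct, so in the Grothendieck ring $\Gr(\cat)$ we obtain
\[
\bigl[\uHom(\One_Z, F^d N)\bigr] \;=\; \sum_{\substack{\sigma \in \Delta P \\ \dim\sigma = d \\ \min\sigma \in Z}} \bigl[N(\max\sigma)\bigr].
\]

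Finally, I would invoke the standard fact that for any bounded cochain complex $C^\bullet$ in an abelian category, the alternating sum of the classes of the terms equals the alternating sum of the classes of the cohomology objects in $\Gr(\cat)$ (this follows inductively from the short exact sequences relating cocycles, coboundaries, and cohomology). Applying this to the complex above and combining with the term-by-term identity yields
\[
\chi(\One_Z, N) \;=\; \sum_{d \geq 0} (-1)^d \bigl[\uExt^d(\One_Z, N)\bigr] \;=\; \sum_{d \geq 0} (-1)^d \bigl[\uHom(\One_Z, F^d N)\bigr],
\]
which is exactly the claimed formula. The only mildly subtle step is the general Euler characteristic identity in the Grothendieck ring, but this is routine; the bulk of the work is the clean identification of $\uHom(\One_Z, F^d N)$ via Corollary~\ref{cor:spread_hom}, and I do not anticipate any real obstacle.
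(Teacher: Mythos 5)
Your proposal is correct and follows essentially the same route as the paper: apply $\uHom(\One_Z,-)$ to the standard cofree resolution, identify the terms via Corollary~\ref{cor:spread_hom}, and take alternating sums in the Grothendieck ring. The only cosmetic difference is that you cite the bounded-complex Euler characteristic identity as a standard fact, whereas the paper writes out the same telescoping argument explicitly using the short exact sequences for cocycles, coboundaries, and cohomology.
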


\begin{proof}
We compute the Euler characteristic by analyzing the cochain complex:
\begin{equation}\label{eq:standard_cofree_hom}
0 \stackrel{\delta^{-1}}{\to} \uHom(\One_Z, F^0 N) \stackrel{\delta^0}{\to} \uHom(\One_Z, F^1 N) \stackrel{\delta^1}{\to} \uHom(\One_Z, F^2 N) \stackrel{\delta^2}{\to} \cdots
\end{equation}
For each degree \(d\), let 
\[
Z_d(\One_Z, N) := \ker(\delta^d) \quad \text{and} \quad B_d(\One_Z, N) := \im(\delta^{d-1}),
\]
yielding the short exact sequence:
\[
0 \to Z_d(\One_Z, N) \hookrightarrow \uHom(\One_Z, F^d N) \twoheadrightarrow B_{d+1}(\One_Z, N) \to 0.
\]
By Corollary~\ref{cor:spread_hom}, the middle term can be expressed as:
\[
\uHom(\One_Z, F^d N) \cong \prod_{\substack{\sigma \in \Delta P \\ \dim \sigma = d \\ \min \sigma \in Z}} N(\max \sigma).
\]
Thus, in the Grothendieck ring:
\[
\sum_{\substack{\sigma \in \Delta P \\ \dim \sigma = d \\ \min \sigma \in Z}} \left[ N(\max \sigma) \right] = \left[ \uHom(\One_Z, F^d N) \right] = [Z_d(\One_Z, N)] + [B_{d+1}(\One_Z, N)].
\]

Since the cohomology of Equation~\eqref{eq:standard_cofree_hom} computes the Ext-functors $\uExt^d(\One_Z, N)$, we have the following short exact sequence:
\[
0 \to B_d(\One_Z, N) \hookrightarrow Z_d(\One_Z, N) \twoheadrightarrow \uExt^d(\One_Z, N) \to 0,
\]
which gives:
\[
[\uExt^d(\One_Z, N)] = [Z_d(\One_Z, N)] - [B_d(\One_Z, N)].
\]

Using the fact that \( [B_0(\One_Z, N)] = 0 \), we obtain:
\begin{align*}
\sum_{\substack{\sigma \in \Delta P \\ \dim \sigma = d \\ \min \sigma \in Z}} (-1)^d \left[ N(\max \sigma) \right] 
&= \sum_{d \geq 0} (-1)^d \left( [Z_d(\One_Z, N)] + [B_{d+1}(\One_Z, N)] \right) \\
&= \sum_{d \geq 0} (-1)^d [Z_d(\One_Z, N)] - \sum_{d \geq 1} (-1)^d [B_d(\One_Z, N)] \\
&= \sum_{d \geq 0} (-1)^d \left( [Z_d(\One_Z, N)] - [B_d(\One_Z, N)] \right) \\
&= \sum_{d \geq 0} (-1)^d [\uExt^d(\One_Z, N)] \\
&= \chi(\One_Z, N). \qedhere
\end{align*}
\end{proof}

\section{M\"obius Cohomology}
\label{sec:mobius_cohomology}
In this section, we introduce Möbius cohomology as a special case of Ext-cohomology that applies to indicator modules associated with individual elements of a poset. 
The discussion concludes with an overview of Möbius inversions and how Möbius cohomology serves as its categorification.
\subsection{Definition}
We begin by specializing the notion of indicator modules $\One_Z$ to singleton sets. For any element \( a \in P \), the set \( \{a\} \subset P \) is a spread, allowing us to define its indicator module in the same way as for more general indicator $P$-modules. For simplicity, we denote the $P$-module $\One_{\{a\}}$ by $\One_a$.

\begin{defn}
The \define{M\"obius cohomology} of a $P$-module $N$ at $a \in P$ is given by the Ext-functor:
\[
\uExt^d(\One_a, N).
\]
\end{defn}

By Equation~\ref{eq:standard_cofree_hom} and Corollary~\ref{cor:spread_hom}, the M\"obius cohomology corresponds to the cohomology of the following cochain complex:
\begin{equation}\label{eq:standard_mobius_complex}
0 \to \prod_{\substack{\sigma \in \Delta P \\ \dim \sigma = 0 \\ \min \sigma = a}} N(\max \sigma)^{\downarrow a} 
\stackrel{\delta^0}{\to} \prod_{\substack{\sigma \in \Delta P \\ \dim \sigma = 1 \\ \min \sigma = a}} N(\max \sigma)^{\downarrow a} 
\stackrel{\delta^1}{\to} \prod_{\substack{\sigma \in \Delta P \\ \dim \sigma = 2 \\ \min \sigma = a}} N(\max \sigma)^{\downarrow a} 
\stackrel{\delta^2}{\to} \cdots
\end{equation}

\begin{rmk}
M\"obius homology, introduced by Patel and Skraba~\cite{patel2023mobius_homology}, provides a dual view. It is defined as the homology of the chain complex:
\[
\cdots \longrightarrow \prod_{\substack{\sigma \in \Delta P \\ \dim \sigma = 2 \\ \max \sigma = a}} N(\min \sigma)^{\downarrow a} 
\longrightarrow \prod_{\substack{\sigma \in \Delta P \\ \dim \sigma = 1 \\ \max \sigma = a}} N(\min \sigma)^{\downarrow a} 
\longrightarrow \prod_{\substack{\sigma \in \Delta P \\ \dim \sigma = 0 \\ \max \sigma = a}} N(\min \sigma)^{\downarrow a} 
\longrightarrow 0.
\]
The above chain complex is precisely the dual of Equation~\ref{eq:standard_mobius_complex}, demonstrating that M\"obius cohomology and homology are dual constructions.
\end{rmk}

\subsection{Background: M\"obius Inversion}
This subsection introduces the classical combinatorial tool of Möbius inversion, which, in the next subsection, is connected to Möbius cohomology through the Euler characteristic.

For any pair \(a \leq c\) in a poset $P$, the \emph{interval} between them is defined as:
\[
[a, c] := \{b \in P \mid a \leq b \leq c\}.
\]
Let $\Int P$ denote the set of all intervals in $P$. 

The \emph{$\Z$-incidence algebra} of $P$, denoted $\Inc(P)$, consists of functions $\alpha: \Int P \to \Z$ with operations of scaling, addition, and multiplication. Multiplication is given by:
\[
(\alpha \ast \beta)[a,c] := \sum_{b : a \leq b \leq c} \alpha[a,b] \cdot \beta[b,c].
\]
The multiplicative identity in this algebra is:
\[
\One[a, b] = 
\begin{cases}
1 & \text{if } a = b, \\
0 & \text{otherwise}.
\end{cases}
\]
The \emph{zeta function}, denoted by $\zeta$, assigns the value $1$ to every interval: $\zeta[a, b] = 1$. The inverse of $\zeta$, denoted $\mu$, is known as the \emph{M\"obius function}. 

\begin{lem}[Philip Hall’s Theorem, Prop 3.8.5 \cite{stanley_2011}]
\label{lem:Hall}
For \(a \leq b\) in $P$, let $n_i$ denote the number of chains of length $i$ from $a$ to $b$. Then:
\[
\mu[a, b] = \sum_{i > 0} (-1)^i \cdot n_i.
\]
\end{lem}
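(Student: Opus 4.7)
The plan is to exploit the characterization of $\mu$ as the convolution inverse of $\zeta$ in $\Inc(P)$ and to expand $\mu$ as a finite geometric series. First I would decompose $\zeta = \One + \bar\zeta$, where $\bar\zeta \in \Inc(P)$ is the \emph{strict zeta function} defined by $\bar\zeta[a,b] = 1$ if $a < b$ and $\bar\zeta[a,b] = 0$ otherwise. A short induction on $i$, unfolding the defining formula of $\ast$, shows that $\bar\zeta^{\,i}[a,b] = n_i$: the base case $i = 1$ is immediate from the definition, and in the inductive step a chain $a = a_0 < a_1 < \cdots < a_i = b$ is counted by partitioning on the choice of $a_1$, which is exactly what the convolution $\bar\zeta \ast \bar\zeta^{\,i-1}$ records.

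Because $P$ is finite, every chain in $P$ has length at most some $N$, so $\bar\zeta^{\,i} = 0$ for $i > N$ and the formal series $S := \sum_{i=0}^{N}(-1)^i \bar\zeta^{\,i}$ is a genuine element of $\Inc(P)$. A direct telescoping computation of $(\One + \bar\zeta) \ast S$ gives $\One + (-1)^N \bar\zeta^{\,N+1} = \One$, so by uniqueness of inverses in $\Inc(P)$ we conclude $\mu = S$. Evaluating at an interval $[a,b]$ with $a < b$, where $n_0 = 0$, yields the claimed identity $\mu[a,b] = \sum_{i > 0}(-1)^i n_i$; the trivial edge case $a = b$ reduces to $\mu[a,a] = 1$.

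The only substantive step is the combinatorial identification $\bar\zeta^{\,i}[a,b] = n_i$, and even that is routine once the convolution formula is unpacked. The rest is formal manipulation in the incidence algebra, with finiteness of $P$ guaranteeing termination of the geometric series, so I do not expect any real obstacle in carrying out the proof.
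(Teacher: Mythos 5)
Your proof is correct and is essentially the standard argument behind the cited result: the paper does not prove this lemma itself but defers to Stanley (Prop.~3.8.5), whose proof is exactly your decomposition $\zeta = \One + \bar\zeta$, the identification $\bar\zeta^{\,i}[a,b] = n_i$, and the terminating geometric series $\mu = \sum_{i\geq 0}(-1)^i\bar\zeta^{\,i}$ justified by nilpotency of the strict zeta function on a finite poset. Your handling of the edge case is also apt: as displayed (sum over $i>0$) the formula only holds for $a<b$, since for $a=b$ the length-zero chain must be counted to recover $\mu[a,a]=1$.
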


\begin{defn}
Let $P$ be a finite poset, and let $\Gr$ be a unital commutative ring. The \define{(upper) M\"obius inversion} of a function $f : P \to \Gr$ is defined as:
\begin{equation}
\label{eq:mobius_inversion}
\partial f(a) := \sum_{b : a \leq b} f(b) \cdot \mu[a, b].
\end{equation}
\end{defn}

The M\"obius inversion is the unique $\Gr$-valued function on $P$ satisfying the following identity for all \(a \in P\):
\begin{align*}
\sum_{b : a \leq b} \partial f(b) &= \sum_{b : a \leq b} \partial f(b) \cdot \zeta[a,b] \\
&= \sum_{b : a \leq b} \left( \sum_{c : b \leq c} f(c) \cdot \mu[b,c] \right) \cdot \zeta[a,b] \\
&= \sum_{c : a \leq c} f(a) \left( \sum_{b : a \leq b \leq c} \zeta[a,b] \cdot \mu[b,c] \right) \\
&= \sum_{c : a \leq c} f(c) \cdot \One[a,c] \\
&= f(a).
\end{align*}

Note that the set $\Gr^P$ of all functions $P\to \Gr$ forms an $\Gr$-module. M\"obius inversion can then be viewed as a module homomorphism $\partial:\Gr^P\to \Gr^P$. We will also write $\partial_P$ when we wish to emphasize the poset over which the M\"obius inversion is being taken.

\begin{rmk}
While our focus is on the upper Möbius inversion, which sums over elements greater than or equal to a given element \(a \in P\), there is an alternative form known as the \emph{lower Möbius inversion}. 
Specifically, for a function \(f : P \to \Gr\), the lower inversion is defined as:
\[
\partial_{-} f(a) = \sum_{b : b \leq a} f(b) \cdot \mu[b, a].
\]
Although this form is not required in our framework, it plays a dual role to the upper inversion.
\end{rmk}

\subsection{Euler Characteristic of M\"obius Cohomology}
We now examine the relationship between Möbius cohomology and Möbius inversion through the Euler characteristic.

\begin{defn}
The \define{dimension function} of a $P$-module $N$ is the function \(n : P \to \Gr(\cat)\) given by:
\[
n(b) := [N(b)].
\]
\end{defn}

\begin{thm}
\label{thm:cohom_euler_char}
For any $P$-module $N$ and \(a \in P\), the M\"obius inversion of the dimension function equals the Euler characteristic of the M\"obius cohomology:
\[
\partial n(a) = \sum_{d \geq 0} (-1)^d \big[\uExt^d(\One_a, N) \big].
\]
\end{thm}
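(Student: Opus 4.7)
The plan is to apply Proposition~\ref{prop:euler_indicator} with the singleton spread $Z = \{a\}$, regroup the resulting double sum by the maximum element of each simplex, and recognize the inner sum via Philip Hall's Theorem (Lemma~\ref{lem:Hall}).

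More concretely, Proposition~\ref{prop:euler_indicator} gives
\[
\sum_{d \geq 0} (-1)^d \big[\uExt^d(\One_a, N)\big] \;=\; \chi(\One_a, N) \;=\; \sum_{d \geq 0} (-1)^d \sum_{\substack{\sigma \in \Delta P \\ \dim \sigma = d \\ \min \sigma = a}} [N(\max \sigma)].
\]
A $d$-simplex $\sigma \in \Delta P$ with $\min \sigma = a$ is a strict chain $a = a_0 < a_1 < \cdots < a_d$; its maximum $b = \max \sigma$ ranges over elements $b \geq a$. Grouping such simplices by $b$, the number of chains $a = a_0 < \cdots < a_d = b$ is exactly Hall's number $n_d = n_d(a,b)$. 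Swapping the order of summation,
\[
\chi(\One_a, N) \;=\; \sum_{b : a \leq b} [N(b)] \cdot \Bigl( \sum_{d \geq 0} (-1)^d\, n_d(a,b) \Bigr).
\]

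By Lemma~\ref{lem:Hall}, for $b > a$ the inner sum is $\mu[a,b]$; for $b = a$ the only contributing chain is the singleton $\{a\}$ in degree $d=0$, whose contribution $[N(a)]$ matches $[N(a)] \cdot \mu[a,a] = [N(a)]$. Using $n(b) = [N(b)]$, the expression becomes
\[
\sum_{b : a \leq b} n(b) \cdot \mu[a,b],
\]
which is $\partial n(a)$ by definition.

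The main (and only) real obstacle is bookkeeping: one must verify carefully that dimension $d$ of a simplex in $\Delta P$ corresponds to chain length $d$ in Hall's sense, and reconcile the $b=a$ boundary case with the form of Hall's theorem (where the $d=0$ singleton chain supplies the diagonal value $\mu[a,a]=1$). Everything else is a direct substitution of the results already established.
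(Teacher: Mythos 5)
Your proposal is correct and follows essentially the same route as the paper's own proof: apply Proposition~\ref{prop:euler_indicator} with the singleton spread $\{a\}$, regroup the sum by $b = \max\sigma$ to obtain $\sum_{b \geq a}\sum_d (-1)^d n_d(a,b)\,[N(b)]$, and invoke Lemma~\ref{lem:Hall} together with the definition of $\partial n$. Your extra care with the $b=a$ case and the dimension-versus-chain-length bookkeeping is a welcome refinement of the same argument.
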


\begin{proof} For each $a\in P$, we have
\begin{align*}
\sum_{d \geq 0} (-1)^d \big[ \uExt (\One_a, N) \big ] &= 
	\prod_{\substack{\sigma \in \Delta P \\ \dim \sigma = d \\ \min \sigma = a}} N(\max \sigma) 
&&  \text{by Proposition~\ref{prop:euler_indicator}} \\
	&= \sum_{d \geq 0} (-1)^d \sum_{b : a \leq b} n_d(a,b) \cdot [N(b)] \\
	&= \sum_{b : a \leq b} \sum_{d \geq 0} (-1)^d n_d(a,b) \cdot [N(b)] \\
	&= \sum_{b : a \leq b} \mu(a,b) \cdot [N(b)] && \text{by Lemma~\ref{lem:Hall}} \\
	&= \partial m(a). && \text{by Equation~(\ref{eq:mobius_inversion})} \qedhere
\end{align*}
\end{proof}

\begin{rmk}
The lower Möbius inversion arises naturally in the decategorification of Möbius homology. 
This reflects the duality between homology and cohomology: while Möbius cohomology decategorifies to the upper inversion, Möbius homology decategorifies to the lower inversion.
\end{rmk}

\section{Galois Connections}
\label{sec:Galois_connections}

This section examines Galois connections, which are adjunctions between posets. We begin by defining the fundamental functors between poset modules that arise from monotone functions, followed by a discussion of the adjoint functors induced by Galois connections. 
The central focus of this section is Theorem~\ref{thm:categorical_RGCT}, which serves as a categorification of Rota’s classical theorem on Galois connections and Möbius functions. 
We then review Rota’s classical theorem before demonstrating how it is categorified by our theorem.

\subsection{Adjoint Functors from Monotone Functions}
In this subsection, we explore the functors that naturally arise from monotone functions between posets.
These functors—pushforward, pushforward with open supports, and pullback—allow us to transfer structure between categories of modules over different posets.

Let \( f: P \to Q \) be a monotone function between two posets. A function \( f \) is monotone if \( a \leq b \) in \( P \) implies \( f(a) \leq f(b) \) in \( Q \). 
A monotone function generates three distinct but related functors:

\begin{itemize}
    \item The \define{pushforward} functor $f_\ast : \cat^P \to \cat^Q$ sends a $P$-module $M$ to the $Q$-module $f_\ast M$, where
    \[
    f_\ast M(x) := \lim_{a \in P: f(a) \geq x} M(a).
    \]

    \item The \define{pushforward with open supports} functor $f_\dagger : \cat^P \to \cat^Q$ sends a $P$-module $M$ to the $Q$-module $f_\dagger M$, where
    \[
    f_\dagger M(x) := \colim_{a \in P: f(a) \leq x} M(a).
    \]

    \item The \define{pullback} functor $f^\ast : \cat^Q \to \cat^P$ sends a $Q$-module $N$ to the $P$-module $f^\ast N$, where
    \[
    f^\ast N := N \circ f.
    \]
\end{itemize}

The relationships among these functors are summarized in the following diagram:
\[
\begin{tikzcd}
    \cat^P \ar[rr, bend left, "f_\ast"] \ar[rr, bend right, "f_\dagger"'] && \cat^Q \ar[ll, "f^\ast"']
\end{tikzcd}
\]

\begin{prop}\label{prop:P-module_functors_adjoint}
\cite[Theorems 3.14 and 3.15]{curry2018} For every monotone function \( f : P \to Q \), the following adjunctions hold:
\[
f_\dagger \dashv f^\ast \quad \text{and} \quad f^\ast \dashv f_\ast.
\]
\end{prop}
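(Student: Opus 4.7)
The plan is to prove both adjunctions simultaneously by recognising $f_\dagger$ and $f_\ast$ as the pointwise left and right Kan extensions of a $P$-module along $f\colon P \to Q$. Because $Q$ is a poset, the comma categories appearing in the pointwise Kan extension formula collapse to the subsets $\{a\in P : f(a)\leq x\}$ and $\{a\in P : x\leq f(a)\}$; these are precisely the indexing sets used to define $f_\dagger M(x)$ and $f_\ast M(x)$. The adjunctions $f_\dagger \dashv f^\ast$ and $f^\ast \dashv f_\ast$ then drop out of the standard fact that pullback along a functor is right adjoint to left Kan extension and left adjoint to right Kan extension.

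For readers who prefer a self-contained derivation, the bijections can be built by hand from the universal properties of colimits and limits. For $f_\dagger \dashv f^\ast$, a natural transformation $\eta\colon f_\dagger M \to N$ amounts, by the universal property of the colimit defining $f_\dagger M(x)$, to a family of morphisms $\eta_{a,x}\colon M(a)\to N(x)$ indexed by pairs with $f(a)\leq x$ and compatible with the order relations of both $P$ and $Q$. Restricting to the diagonal pairs $(a, f(a))$ yields a natural transformation $\phi\colon M \to f^\ast N$ with components $\phi_a\colon M(a)\to N(f(a))$. Conversely, given such a $\phi$, the composites $M(a)\to N(f(a))\to N(x)$, where the second arrow is $N(f(a)\leq x)$, assemble into a cocone over the diagram computing $f_\dagger M(x)$, which by universality induces the required $\eta_x$. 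The adjunction $f^\ast \dashv f_\ast$ is entirely dual, replacing the colimit universal property by that of the limit defining $f_\ast$ and using diagonal pairs of the form $(a, f(a))$ with $x \leq f(a)$.

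The content is purely formal. The only step that requires genuine care is verifying that the compatibility conditions on the two-variable families $(\eta_{a,x})$ reduce exactly to ordinary naturality of $\phi$ over $P$; this works cleanly because $Q$ is thin, so all coherences in the $Q$-direction are automatic and no higher data survives. Accordingly, the proof is largely bookkeeping, and I anticipate no substantive obstacle beyond keeping the index conditions straight in the two dual constructions.
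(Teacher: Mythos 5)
Your proof is correct and follows the standard route: the paper itself does not prove this proposition but cites Curry's thesis, where the argument is exactly the one you give, namely identifying $f_\dagger$ and $f_\ast$ as pointwise left and right Kan extensions along $f$ (with the comma categories collapsing to the subposets $\{a : f(a)\leq x\}$ and $\{a : f(a)\geq x\}$ because $Q$ is thin) and invoking, or rederiving by hand, the adjunctions $\mathrm{Lan}_f \dashv f^\ast \dashv \mathrm{Ran}_f$. Your hand-built bijections between the two-variable families and natural transformations over $P$ are sound, and the existence of the needed (co)limits is guaranteed since the posets are finite and $\cat$ is abelian.
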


Since right adjoints are left-exact, and left adjoints are right-exact, the following properties hold:
\begin{itemize}
    \item \( f_\ast \) is left-exact.
    \item \( f^\ast \) is exact (both left-exact and right-exact).
    \item \( f_\dagger \) is right-exact.
\end{itemize}

\subsection{Galois Connections and Functorial Equality}
In this subsection, we explore how Galois connections induce equalities of functors.

\begin{defn}
A \define{Galois connection} between two posets \( P \) and \( Q \) consists of two monotone functions \( f: P \to Q \) and \( g: Q \to P \) such that:
\[
f(a) \leq x \iff a \leq g(x),
\]
for all \( a \in P \) and \( x \in Q \). We write \( f : P \rightleftarrows Q : g \) and refer to \( f \) as the \emph{left adjoint} and \( g \) as the \emph{right adjoint}.
\end{defn}

\begin{prop}\label{prop:galois_connection_functor_equalites}
For a Galois connection \( f : P \rightleftarrows Q : g \), the following functorial equalities hold:
\[
f^\ast = g_\ast \quad \text{and} \quad f_\dagger = g^\ast.
\]
\end{prop}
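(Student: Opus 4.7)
The plan is to prove each equality by a direct computation, using the defining biconditional of the Galois connection to rewrite the indexing posets of the limit and colimit in the definitions of $g_\ast$ and $f_\dagger$. Both statements collapse to the observation that a (co)limit over a principal up-set (resp.\ down-set) in a poset is the value of the functor at the least (resp.\ greatest) element.

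For $f^\ast = g_\ast$: fix $N \in \cat^Q$ and $a \in P$. By definition,
\[
g_\ast N(a) = \lim_{x \in Q : g(x) \geq a} N(x).
\]
The Galois condition $f(a) \leq x \iff a \leq g(x)$ rewrites the indexing set as $\{x \in Q : x \geq f(a)\}$, which is the principal up-set at $f(a)$. Since this subposet has $f(a)$ as its minimum element, the inclusion of the singleton $\{f(a)\}$ is initial, so the limit is canonically isomorphic to $N(f(a)) = f^\ast N(a)$. I would then check that this isomorphism is natural in $a$ (using functoriality of the $Q$-module structure under $f(a) \leq f(a')$ when $a \leq a'$) and in $N$, yielding the claimed equality of functors.

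For $f_\dagger = g^\ast$: the cleanest route is to invoke uniqueness of adjoints. By Proposition~\ref{prop:P-module_functors_adjoint} we have $f_\dagger \dashv f^\ast$ and $g^\ast \dashv g_\ast$. Substituting the equality $f^\ast = g_\ast$ just established, both $f_\dagger$ and $g^\ast$ are left adjoints of the same functor, so they are naturally isomorphic. Alternatively, one can repeat the direct argument dually: $f_\dagger M(x) = \colim_{a : f(a) \leq x} M(a) = \colim_{a : a \leq g(x)} M(a)$, and this colimit over the principal down-set at $g(x)$ equals $M(g(x)) = g^\ast M(x)$.

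There is no real obstacle here; the content of the statement is entirely the Galois condition, and the main point of care is purely bookkeeping — ensuring that the isomorphism of objects is natural with respect to the morphisms of $\cat^P$ and $\cat^Q$ and that it respects the structure morphisms of the resulting modules as $a$ and $x$ vary. Since we are taking (co)limits over subposets that continuously acquire a new extremal element, this naturality follows from the universal property of limits and colimits with no further work.
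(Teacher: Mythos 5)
Your proof is correct and matches the paper's argument: the paper proves both equalities by exactly your direct computation, rewriting the indexing poset via the Galois condition $f(a)\leq x \iff a\leq g(x)$ and evaluating the limit (resp.\ colimit) at the minimum $f(a)$ (resp.\ maximum $g(x)$) of the resulting principal up-set (resp.\ down-set). The only caveat is that your preferred route for $f_\dagger = g^\ast$ via uniqueness of adjoints yields a natural isomorphism rather than the stated equality of functors, but the dual direct computation you also sketch (which is what the paper uses) gives the equality on the nose.
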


\begin{proof}
For \( N \in \cat^Q \) and \( a \in P \), we have:
\[
g_\ast N(a) = \lim_{x \in Q : a \leq g(x)} N(x) = N(f(a)) = f^\ast N(a).
\]
Similarly, for \( M \in \cat^P \) and \( x \in Q \), we obtain:
\[
f_\dagger M(x) = \colim_{a \in P : f(a) \leq x} M(a) = M(g(x)) = g^\ast M(x). \qedhere
\]
\end{proof}

\subsection{Categorical Galois Connection Theorem}
Galois connections naturally give rise to an enriched adjunction leading us to the main theorem of this section.

Given a Galois connection \( f: P \rightleftarrows Q : g \), Propositions~\ref{prop:P-module_functors_adjoint} and \ref{prop:galois_connection_functor_equalites} establish the following adjunctions:
\[
     g^*  = f_\dagger\dashv f^* \quad \text{and} \quad  g_* = f^* \dashv f_*.
\]
Thus, we have the following natural isomorphisms:
\[
    \Nat(g^* M, N) \cong \Nat(M, f^* N) \quad \text{and} \quad \Nat(g_\ast N, M) \cong \Nat(N, f_\ast M),
\]
for all \( M \in \cat^P \) and \( N \in \cat^Q \). 

In what follows, we extend these adjunctions to the enriched hom setting, formalized in Theorem~\ref{thm:categorical_RGCT}. To do so, we first establish the necessary and sufficient conditions under which an adjunction between functors extends to the enriched hom.

\begin{lem}
\label{lem:tensor_hom}
Let $L : \cat^P \rightleftarrows \cat^Q : R$ be a pair of adjoint functors with $L \dashv R$. Then
\[
\uHom(L(M), N) \cong \uHom(M, R(N))
\]
for all $M \in \cat^P$ and $N \in \cat^Q$ if and only if 
\[
L(A^P \otimes M) \cong A^Q \otimes L(M)
\]
for all $M \in \cat^P$. Moreover, the first isomorphism is natural in $M$ and $N$ if and only if the second is natural in $M$.
\end{lem}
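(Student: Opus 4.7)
The plan is to use Yoneda combined with Proposition~\ref{prop:enriched_hom} to reduce each of the two claimed isomorphisms to a comparison of natural-transformation sets, and then bridge them via the adjunction $L \dashv R$. Concretely, for any test object $A \in \cat$, I would chain
\[
\Hom_\cat\bigl(A, \uHom(L(M), N)\bigr) \cong \Nat\bigl(A^Q \otimes L(M), N\bigr)
\]
and
\[
\Hom_\cat\bigl(A, \uHom(M, R(N))\bigr) \cong \Nat\bigl(A^P \otimes M, R(N)\bigr) \cong \Nat\bigl(L(A^P \otimes M), N\bigr),
\]
where in each case the first isomorphism is Proposition~\ref{prop:enriched_hom} and the final isomorphism on the second line uses $L \dashv R$. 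All three are natural in $A$, $M$, and $N$.

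For the direction $(\Leftarrow)$, I would assume $L(A^P \otimes M) \cong A^Q \otimes L(M)$ naturally in $M$. Applying $\Nat(-, N)$ identifies the right-hand sides of the two displays, so their left-hand sides are naturally isomorphic in $A$; the Yoneda lemma in the variable $A \in \cat$ then produces the desired $\uHom(L(M), N) \cong \uHom(M, R(N))$, and naturality in $M$ and $N$ is inherited from the hypothesis. For the direction $(\Rightarrow)$, I would assume the enriched-hom isomorphism natural in both $M$ and $N$. Running the same chain backwards gives $\Nat(L(A^P \otimes M), N) \cong \Nat(A^Q \otimes L(M), N)$ natural in $N \in \cat^Q$, and a second appeal to Yoneda---this time in $N$---delivers $L(A^P \otimes M) \cong A^Q \otimes L(M)$ with naturality in $M$.

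The main obstacle will be entirely bookkeeping: I must track naturality across three separate variables and ensure that Yoneda is invoked in the correct variable on each side of the equivalence ($A \in \cat$ for the forward direction, $N \in \cat^Q$ for the converse). No deeper structural ideas are needed; this is simply the specialization to the present setting of the standard fact that a left adjoint between enriched categories carries an enrichment precisely when it commutes with the tensoring by objects of the base.
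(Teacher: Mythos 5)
Your proposal is correct and follows essentially the same route as the paper's proof: both directions are obtained by chaining Proposition~\ref{prop:enriched_hom}, the adjunction $L \dashv R$, and the hypothesis, then invoking Yoneda in the test object $A$ for one implication and in $N$ for the other. The naturality bookkeeping you flag is exactly what the paper tracks as well, so no further changes are needed.
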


\begin{proof}
Assume that $\uHom(L(M), N) \cong \uHom(M, R(N))$ for all $M \in \cat^P$ and $N \in \cat^Q$. Then for any $M \in \cat^P$ and $N \in \cat^Q$, we have:
\begin{align*}
    \Nat(L(A^P \otimes M), N) &\cong \Nat(A^P \otimes M, R(N)) && \text{since } L \dashv R \\
    &\cong \Hom_\cat(A, \uHom(M, R(N))) && \text{by Proposition~\ref{prop:enriched_hom}} \\
    &\cong \Hom_\cat(A, \uHom(L(M), N)) && \text{by assumption} \\
    &\cong \Nat(A^Q \otimes L(M), N) && \text{by Proposition~\ref{prop:enriched_hom}}.
\end{align*}
Since $N \in \cat^Q$ was arbitrary, by the Yoneda lemma, we have $L(A^P \otimes M) \cong A^Q \otimes L(M)$. These isomorphisms are natural in $M$ and $N$, assuming the isomorphism $\uHom(L(M), N) \cong \uHom(M, R(N))$ is natural. Hence, the isomorphism $L(A^P \otimes M) \cong A^Q \otimes L(M)$ is natural in $M$.

Conversely, suppose $L(A^P \otimes M) \cong A^Q \otimes L(M)$ for all $M \in \cat^P$. For every object $A \in \cat$, we have:
\begin{align*}
    \Hom_\cat(A, \uHom(L(M), N)) &\cong \Nat(A^Q \otimes L(M), N) && \text{by Proposition~\ref{prop:enriched_hom}} \\
    &\cong \Nat(L(A^P \otimes M), N) && \text{by assumption} \\
    &\cong \Nat(A^P \otimes M, R(N)) && \text{since } L \dashv R \\
    &\cong \Hom_\cat(A, \uHom(M, R(N))) && \text{by Proposition~\ref{prop:enriched_hom}}.
\end{align*}
By the Yoneda lemma, we conclude that $\uHom(L(M), N) \cong \uHom(M, R(N))$. All of these isomorphisms are natural in $M$ and $N$, provided that $L(A^P \otimes M) \cong A^Q \otimes L(M)$ is natural in $M$. This completes the proof.
\end{proof}

Next, we verify that pullbacks and pushforwards with open supports satisfy the conditions of the preceding lemma.
\begin{lem} 
\label{lem:daggerpushforward_and_pullback_commute_with_tensor}
Let $h : P \to Q$ be a monotone function between posets. Then $h_\dagger(A^P \otimes M) \cong A^Q \otimes h_\dagger M$ and $h^*(A^Q \otimes N) \cong A^P \otimes h^* N$ for all $M \in \cat^P$ and $N \in \cat^Q$. Moreover, these isomorphisms are natural in $M$ and~$N$. 
\end{lem}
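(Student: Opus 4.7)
The plan is to reduce each isomorphism to a direct computation using the definitions of $h^\ast$, $h_\dagger$, and the constant modules $A^P$, $A^Q$, appealing to the fact that $A \otimes (-) : \cat \to \cat$ preserves colimits since it is a left adjoint (by the closed monoidal hypothesis on $\cat$).

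For the pullback isomorphism I would simply unwind definitions pointwise. At any $a \in P$,
\[
h^\ast(A^Q \otimes N)(a) = (A^Q \otimes N)(h(a)) = A^Q(h(a)) \otimes N(h(a)) = A \otimes N(h(a)),
\]
while $(A^P \otimes h^\ast N)(a) = A \otimes N(h(a))$. These agree on nose, and the morphisms $a \leq b$ induce the identity on the $A$-factor on both sides, so the identity maps assemble into a natural isomorphism of $P$-modules. Naturality in $N$ is evident because every step is functorial in the second tensor factor.

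For the pushforward with open supports, I would evaluate at $x \in Q$ and write
\[
h_\dagger(A^P \otimes M)(x) = \colim_{a : h(a) \leq x}\bigl(A \otimes M(a)\bigr),
\]
using the definition of $h_\dagger$ and of the tensor product of $P$-modules. The main observation is that the functor $A \otimes (-) : \cat \to \cat$ is left adjoint to $\IHom_\cat(A,-)$ (closed monoidal structure on $\cat$), so it preserves colimits. Pulling the tensor factor outside the colimit yields
\[
\colim_{a : h(a) \leq x}\bigl(A \otimes M(a)\bigr) \cong A \otimes \colim_{a : h(a) \leq x} M(a) = A \otimes h_\dagger M(x) = (A^Q \otimes h_\dagger M)(x).
\]
One then checks that the transition maps $x \leq y$ in $Q$ agree on both sides; this follows because the universal morphism between colimits produced by $A \otimes (-)$ is compatible with the structure maps of $h_\dagger M$, and on the right-hand side the tensor with $A$ acts by the identity on the $A$-factor (since $A^Q$ has identity transition maps).

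Naturality in $M$ and $N$ follows automatically: the pullback isomorphism is the identity, hence natural; the pushforward isomorphism is assembled from the canonical isomorphism expressing that $A \otimes (-)$ commutes with colimits, and such canonical comparison maps are natural in the diagram being colimited. The only substantive point is the appeal to cocontinuity of $A \otimes (-)$, which is the crucial consequence of the closed symmetric monoidal hypothesis and is the reason $h_\dagger$ (rather than $h_\ast$) is the correct partner to $h^\ast$ for this tensor-commutation property. I expect no real obstacle; the result is essentially a formal consequence of cocontinuity of $A \otimes (-)$ together with the pointwise definitions of $h^\ast$ and $h_\dagger$.
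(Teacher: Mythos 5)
Your proposal is correct and follows essentially the same route as the paper: the pullback isomorphism is verified as a pointwise equality with identity transition maps, and the pushforward isomorphism follows by evaluating at each $x \in Q$ and pulling $A \otimes (-)$ out of the colimit because it preserves colimits. The only cosmetic difference is that you justify cocontinuity of $A \otimes (-)$ directly from the closed structure on $\cat$ (its right adjoint $\IHom_\cat(A,-)$), while the paper cites the adjunction of Proposition~\ref{prop:enriched_hom}; both amount to the same fact.
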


\begin{proof}
By Proposition~\ref{prop:enriched_hom}, the tensor product $-\otimes M : \cat \to \cat^P$ is a left-adjoint and therefore
commute with colimits.
Let us verify the isomorphism $h_\dagger(A^P \otimes M) \cong A^Q \otimes h_\dagger M$. For each $x \in Q$, we have
\begin{align*}
    h_\dagger(A^P \otimes M)(x) &= \colim_{a \in P : h(a) \leq x} (A^P \otimes M)(a) && \textup{by definition of $h_\dagger$}\\
    &\cong A \otimes \colim_{a \in P : h(a) \leq x} M(a) && \textup{since $\otimes$ commutes with colimts}\\
    &= (A^Q \otimes h_\dagger M)(x) &&\textup{by definition of $h_\dagger$.}
\end{align*}
Each of these isomorphisms is natural in $x$ and $M$, so we obtain an isomorphism $h_\dagger(A^P \otimes M) \cong A^Q \otimes h_\dagger M$ of $Q$-modules, natural in $M$.

Next, we verify the isomorphism $h^*(A^Q \otimes N) \cong A^P \otimes h^* N$. For each $a \in P$, we have:
\[
h^*(A^Q \otimes N)(a) = (A^Q \otimes N)(h(a)) = A \otimes N(h(a)) = A \otimes h^* N(a) = (A^P \otimes h^* N)(a).
\]
Thus, $h^*(A^Q \otimes N)$ and $A^P \otimes h^* N$ are pointwise equal. Moreover, the internal maps of both $P$-modules are the same, so we in fact have the equality $h^*(A^Q \otimes N) = A^P \otimes h^* N$, which is trivially natural in~$N$.
\end{proof}

\begin{thm}[Categorical Rota's Galois Connection Theorem] \label{thm:categorical_RGCT}
Let \( f : P \rightleftarrows Q : g \) be a Galois connection. Then:
\[
\uHom(g^* M, N) \cong \uHom(M, f^* N) \quad \text{and} \quad \uHom(g_* N, M) \cong \uHom(N, f_* M),
\]
for all \( M \in \cat^P \) and \( N \in \cat^Q \).
\end{thm}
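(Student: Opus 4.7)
The plan is to derive both isomorphisms as formal consequences of the three results immediately preceding the theorem: the functorial equalities $g^* = f_\dagger$ and $g_* = f^*$ from Proposition~\ref{prop:galois_connection_functor_equalites}, the adjunctions $f_\dagger \dashv f^*$ and $f^* \dashv f_*$ from Proposition~\ref{prop:P-module_functors_adjoint}, the ``enriched adjunction'' criterion of Lemma~\ref{lem:tensor_hom}, and the compatibility with tensoring by constants recorded in Lemma~\ref{lem:daggerpushforward_and_pullback_commute_with_tensor}. The strategy is, in each case, to rewrite the Galois-style functor in its $f$-form, invoke the relevant adjunction to know Lemma~\ref{lem:tensor_hom} applies, and then quote Lemma~\ref{lem:daggerpushforward_and_pullback_commute_with_tensor} to discharge the hypothesis of that lemma.

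For the first isomorphism, I would first replace $g^*$ by $f_\dagger$ using Proposition~\ref{prop:galois_connection_functor_equalites}, so the claim becomes $\uHom(f_\dagger M, N) \cong \uHom(M, f^* N)$. Since $f_\dagger \dashv f^*$, Lemma~\ref{lem:tensor_hom} (applied with $L = f_\dagger$ and $R = f^*$) shows that the desired enriched-hom isomorphism follows from the natural isomorphism $f_\dagger(A^P \otimes M) \cong A^Q \otimes f_\dagger M$, which is exactly the first statement of Lemma~\ref{lem:daggerpushforward_and_pullback_commute_with_tensor}.

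For the second isomorphism, I would analogously replace $g_*$ by $f^*$, so the claim becomes $\uHom(f^* N, M) \cong \uHom(N, f_* M)$. Since $f^* \dashv f_*$, Lemma~\ref{lem:tensor_hom} (applied this time with $L = f^*$ and $R = f_*$) reduces the problem to the natural isomorphism $f^*(A^Q \otimes N) \cong A^P \otimes f^* N$, which is the second statement of Lemma~\ref{lem:daggerpushforward_and_pullback_commute_with_tensor}. Naturality in $M$ and $N$ is preserved throughout because the underlying isomorphisms in that lemma are natural.

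There is no real obstacle once the preparatory material is in place; the only point requiring care is the bookkeeping of which $g$-functor is a left adjoint versus a right adjoint after its identification with an $f$-functor, so that Lemma~\ref{lem:tensor_hom} is invoked with the correct roles. In particular, $g^*$ plays the role of a left adjoint (via $g^* = f_\dagger$) while $g_*$ plays the role of a left adjoint on the opposite side (via $g_* = f^*$, with $f^* \dashv f_*$); matching these consistently against the $L \dashv R$ pattern of Lemma~\ref{lem:tensor_hom} is the only non-mechanical step, after which both isomorphisms follow formally.
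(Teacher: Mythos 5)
Your proposal is correct and follows essentially the same route as the paper: both proofs combine Lemma~\ref{lem:tensor_hom} (with the adjunctions $f_\dagger \dashv f^*$ and $f^* \dashv f_*$), discharge its hypothesis via Lemma~\ref{lem:daggerpushforward_and_pullback_commute_with_tensor}, and use Proposition~\ref{prop:galois_connection_functor_equalites} to identify the $g$-functors with the $f$-functors. The only difference is cosmetic: you substitute $g^* = f_\dagger$ and $g_* = f^*$ at the start, whereas the paper proves the enriched adjunctions for $f$ first and substitutes at the end.
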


\begin{proof}
By Lemma~\ref{lem:tensor_hom} and Lemma~\ref{lem:daggerpushforward_and_pullback_commute_with_tensor}, we have:
\[
\uHom(h_\dagger M, N) \cong \uHom(M, h^* N) \quad \text{and} \quad \uHom(h^* N, M) \cong \uHom(N, h_* M).
\]
Using Proposition~\ref{prop:galois_connection_functor_equalites}, we substitute \( f_\dagger = g^\ast \) and \( f^\ast = g_\ast \):
\[
\uHom(g^\ast M, N) \cong \uHom(M, f^\ast N), \quad \text{and} \quad \uHom(g_\ast N, M) \cong \uHom(N, f_\ast M). \qedhere
\]
\end{proof}

\subsection{Background: Rota's Theorem}
In this subsection, we review Rota's original theorem, which establishes a fundamental relationship between Möbius functions on two posets connected by a Galois connection. 

\begin{thm}[Rota's Galois Connection Theorem~\cite{rota1964foundations}]
\label{thm:rota_classic}
Let \( f : P \rightleftarrows Q : g \) be a Galois connection. For all \( a \in P \) and \( y \in Q \), the following equality holds:
\[
\sum_{x \in Q : g(x) = a} \mu_Q(x, y) = \sum_{b \in P : f(b) = y} \mu_P(a, b).
\]
\end{thm}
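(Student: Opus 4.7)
The approach is to feed indicator modules into Theorem~\ref{thm:categorical_RGCT} and decategorify via Euler characteristics and Philip Hall's formula. Take $M = \One_a \in \cat^P$ and $N = \One_y \in \cat^Q$; unwinding definitions, $g^*\One_a = \One_{g^{-1}(a)}$ and $f^*\One_y = \One_{f^{-1}(y)}$, and these are indicator modules on the spreads $g^{-1}(a) \subset Q$ and $f^{-1}(y) \subset P$ (preimages under monotone maps are spreads).

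The next step is to lift Theorem~\ref{thm:categorical_RGCT} from $\uHom$ to $\uExt$, i.e., to establish
\[
\uExt^d(g^*\One_a, \One_y) \;\cong\; \uExt^d(\One_a, f^*\One_y)
\]
for all $d \geq 0$. By Proposition~\ref{prop:galois_connection_functor_equalites}, $f^* = g_*$ is the right adjoint of the exact functor $g^* = f_\dagger$, hence $f^*$ preserves injective objects; since $f^*$ is itself exact, applying it to an injective resolution $0 \to \One_y \to I^\bullet$ in $\cat^Q$ produces an injective resolution $0 \to f^*\One_y \to f^*I^\bullet$ in $\cat^P$. Applying Theorem~\ref{thm:categorical_RGCT} in each degree yields an isomorphism of cochain complexes $\uHom(\One_a, f^*I^\bullet) \cong \uHom(g^*\One_a, I^\bullet)$, and taking cohomology gives the displayed Ext isomorphism.

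I would then pass to Euler characteristics in $\Gr(\cat)$ and invoke Proposition~\ref{prop:euler_indicator}. The class $[\One_y(\max\sigma)]$ contributes $[\One]$ precisely when $\max\sigma = y$, and symmetrically for $\One_{f^{-1}(y)}$, so each side collapses to a sum of the form $\sum_{d \geq 0}(-1)^d n_d(\cdot,\cdot)\cdot[\One]$ with the endpoints fixed, where $n_d$ counts chains of dimension $d$ between them. Philip Hall's theorem (Lemma~\ref{lem:Hall}) then identifies the inner sums with $\mu_Q(x,y)$ and $\mu_P(a,b)$, respectively, under the usual convention that $\mu$ vanishes on incomparable pairs. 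Equating coefficients of the multiplicative identity $[\One]$ yields Rota's classical identity.

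The main obstacle is the $\uHom$-to-$\uExt$ lift. In general an exact functor need not preserve injectives, and without such preservation one would have to resort to Grothendieck spectral sequences or balance Ext through cofree resolutions. The Galois hypothesis is precisely what removes this difficulty cleanly, via the identification $f^* = g_*$ of Proposition~\ref{prop:galois_connection_functor_equalites} that makes $f^*$ the right adjoint of an exact functor.
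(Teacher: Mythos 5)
Your route is genuinely different from the paper's. The paper proves the classical statement elementarily: it first establishes $\partial_P\circ f^{\#}=g_{\#}\circ\partial_Q$ (Theorem~\ref{thm:rota_second_ver}) directly from the uniqueness of M\"obius inversion, and then specializes to the indicator function $1_y$; the categorical machinery enters only afterwards, as a decategorification statement (Corollary~\ref{cor:rota_mobius_co}, Theorem~\ref{thm:rota_ext}). You run the categorification in the other direction: feed $\One_a$ and $\One_y$ into Theorem~\ref{thm:categorical_RGCT}, lift to $\uExt$, and take Euler characteristics. Your $\uHom$-to-$\uExt$ lift is in fact more careful than the paper's, which asserts Corollary~\ref{cor:rota_mobius_co} as immediate: observing that $f^{*}$ is right adjoint to the exact functor $f_{\dagger}=g^{*}$, hence preserves injectives, so that $f^{*}I^{\bullet}$ is again an injective resolution, is exactly the right argument, and the needed naturality of the isomorphism in the second variable is supplied by the proofs of Lemmas~\ref{lem:tensor_hom} and~\ref{lem:daggerpushforward_and_pullback_commute_with_tensor}. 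The identifications $g^{*}\One_a=\One_{g^{-1}(a)}$ and $f^{*}\One_y=\One_{f^{-1}(y)}$, and the evaluation of both Euler characteristics via Proposition~\ref{prop:euler_indicator} and Lemma~\ref{lem:Hall}, are also correct.

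The gap is your final step, ``equating coefficients of $[\One]$.'' What the argument actually produces is the identity $\bigl(\sum_{x:\,g(x)=a}\mu_Q(x,y)\bigr)\cdot[\One]=\bigl(\sum_{b:\,f(b)=y}\mu_P(a,b)\bigr)\cdot[\One]$ in $\Gr(\cat)$, whereas Rota's theorem is an identity of integers. You may cancel $[\One]$ only if $[\One]$ has infinite additive order in $\Gr(\cat)$, i.e.\ the canonical map $\Z\to\Gr(\cat)$ is injective, and this is not automatic: for instance, whenever $\cat$ admits countable products (as the paper's standing completeness hypothesis grants), the split exact sequence $0\to\One\to\prod_{n\ge 0}\One\to\prod_{n\ge 1}\One\to 0$ gives $[\One]=0$ by the Eilenberg swindle, and the Grothendieck-ring identity is vacuous. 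The repair is short but must be stated: since the classical theorem makes no reference to $\cat$, instantiate the whole argument in a concrete base category with $\Gr(\cat)\cong\Z$ and $[\One]\mapsto 1$, e.g.\ finite-dimensional vector spaces over a field (abelian, closed symmetric monoidal, every object injective, and only finite limits are needed because $P$ and $Q$ are finite), and then read off the integer identity. With that choice made explicit, your proof is complete, and it genuinely derives Theorem~\ref{thm:rota_classic} from Theorem~\ref{thm:categorical_RGCT}, rather than from the uniqueness argument the paper uses.
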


This theorem connects the Möbius functions \( \mu_P \) and \( \mu_Q \) associated with the respective posets \( P \) and \( Q \). 
We now present an equivalent result using M\"obius inversions.

Given a monotone function \( f : P \to Q \), we define two key operations:

\begin{itemize}
    \item The \define{pushforward} \( f_\# m : Q \to \Gr \) of a function \( m : P \to \Gr \) is given by:
    \[
        f_\# m(z) := \sum_{a \in f^{-1}(z)} m(a).
    \]
    \item The \define{pullback} \( f^\# n : P \to \Gr \) of a function \( n : Q \to \Gr \) is defined by:
    \[
        f^\# n(a) := n(f(a)).
    \]
\end{itemize}
Note that the pushforward and pullback gives rise to $\Gr$-module homomorphisms $f_\sharp:\Gr^P\to \Gr^Q$ and $f^\sharp: \Gr^Q\to \Gr^P$, respectively.

We now introduce an equivalent result to Rota's theorem, which highlights how these pushforward and pullback operations interact with Möbius inversions. This alternative formulation was first observed by Aziz G\"ulen and Alex McCleary for the dual setting of lower Möbius inversions~\cite{GulenMcCleary}.

\begin{thm}
\label{thm:rota_second_ver}
For a Galois connection \( f : P \rightleftarrows Q : g \), the following identity holds:
\[
\partial_P \circ f^\# = g_\# \circ \partial_Q.
\]
\end{thm}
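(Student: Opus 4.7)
The plan is to deduce Theorem~\ref{thm:rota_second_ver} from the classical Rota Galois connection theorem (Theorem~\ref{thm:rota_classic}) by a routine double-sum manipulation, and then note that by Yoneda-style reasoning the two statements are in fact equivalent.

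First I would take an arbitrary function $n : Q \to \Gr$ and an arbitrary $a \in P$, and compute $(\partial_P \circ f^\#)(n)(a)$ and $(g_\# \circ \partial_Q)(n)(a)$ directly from the definitions. Unwinding the pullback and the M\"obius inversion on the left gives
\[
(\partial_P f^\# n)(a) = \sum_{b \in P : a \leq b} n(f(b)) \cdot \mu_P[a,b],
\]
while unwinding the pushforward and M\"obius inversion on the right gives
\[
(g_\# \partial_Q n)(a) = \sum_{x \in Q : g(x) = a} \,\sum_{y \in Q : x \leq y} n(y) \cdot \mu_Q[x,y].
\]

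Next I would reindex both expressions as sums over $y \in Q$ weighted by $n(y)$. On the left, I group the terms by the value $y = f(b)$, using the convention that $\mu_P[a,b] = 0$ unless $a \leq b$, to rewrite the sum as $\sum_{y \in Q} n(y) \cdot \big( \sum_{b : f(b) = y} \mu_P[a,b] \big)$. On the right, I swap the order of summation and use the convention that $\mu_Q[x,y] = 0$ unless $x \leq y$, rewriting it as $\sum_{y \in Q} n(y) \cdot \big( \sum_{x : g(x) = a} \mu_Q[x,y] \big)$. The equality of the two inner coefficients for each pair $(a,y)$ is precisely the content of Theorem~\ref{thm:rota_classic}, so the two outer sums agree for every $n$ and $a$, yielding $\partial_P \circ f^\# = g_\# \circ \partial_Q$.

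There is no real obstacle here; the only subtlety is being careful that the indexing conditions $a \leq b$ and $x \leq y$ are automatically built into the M\"obius coefficients, so that the reindexed inner sums match the statement of Rota's theorem verbatim. I would also remark that the derivation is reversible: taking $n = \delta_y$ (the indicator function of a single element $y \in Q$) in the identity $\partial_P \circ f^\# = g_\# \circ \partial_Q$ and evaluating at $a \in P$ recovers the classical Rota identity, so Theorem~\ref{thm:rota_second_ver} and Theorem~\ref{thm:rota_classic} are indeed equivalent as claimed.
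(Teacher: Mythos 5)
Your proof is mathematically valid, but it runs in the opposite direction from the paper's. You deduce the identity from the classical Rota theorem (Theorem~\ref{thm:rota_classic}) by expanding both sides, reindexing over $y \in Q$ (with the convention that $\mu_P[a,b]=0$ unless $a\leq b$), and matching the inner coefficients via the classical M\"obius-function identity. The paper instead proves Theorem~\ref{thm:rota_second_ver} directly and independently: for arbitrary $n$ and $a$ it computes $\sum_{b : a \leq b}(g_\# \circ \partial_Q)(n)(b)$, merges the double sum into a sum over $\{q \in Q : a \leq g(q)\}$, uses the Galois connection property $a \leq g(q) \iff f(a) \leq q$ to rewrite this as $\sum_{q : f(a) \leq q}\partial_Q n(q) = n(f(a)) = f^\# n(a)$, and then concludes by the uniqueness of the upper M\"obius inversion --- no M\"obius functions appear at all. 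This difference matters in context: the paper's stated aim is to derive the classical Theorem~\ref{thm:rota_classic} from the categorical Theorem~\ref{thm:categorical_RGCT} via Theorem~\ref{thm:rota_second_ver}, and the equivalence of Theorems~\ref{thm:rota_second_ver} and~\ref{thm:rota_classic} is established immediately after the proof (by the same $1_y$ specialization you sketch in your reversibility remark). Proving Theorem~\ref{thm:rota_second_ver} by invoking Theorem~\ref{thm:rota_classic} is not circular in an absolute sense, since Rota's theorem has an external proof, but it would short-circuit the paper's logical chain and make the subsequent ``equivalence'' discussion redundant. What your approach buys is a purely formal coefficient-matching derivation from a known identity; what the paper's buys is independence from the classical result and a cleaner argument resting only on the defining (uniqueness) property of M\"obius inversion and the adjunction $f \dashv g$.
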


\begin{proof}
Let \( n : Q \to \Gr \) be an arbitrary function. For each \( a \in P \), we have

\begin{align*}
\sum_{b : a \leq b} ( g_\#  \circ \partial_Q)(n)(b) 
&= \sum_{b : a \leq b} \sum_{q \in g^{-1}(b)} \partial_Q n(q) \\
&= \sum_{q \in Q : a \leq g(q)} \partial_Q n(q) \\
&= \sum_{q \in Q : f(a) \leq q} \partial_Q n(q) && \text{since \( f \dashv g \)} \\
&= n(f(a)) \\
&= f^\sharp n(a).
\end{align*}
By uniqueness of the M\"obius inversion, it follows that $(g_\sharp \circ \partial_Q)n = (\partial_P \circ f^\sharp)n$. Since this is true for every $n:Q\to \Gr$, the result follows.
\end{proof}

The equivalence between Theorem~\ref{thm:rota_second_ver} and Theorem~\ref{thm:rota_classic} can be demonstrated by applying the Möbius inversion formula to a specific case. Choose \( a \in P \) and \( y \in Q \), and let \( 1_y : Q \to \Gr \) be the function defined by:
\[
1_y(z) =
\begin{cases}
1 & \text{if } z = y, \\
0 & \text{otherwise}.
\end{cases}
\]
Evaluating both sides of the identity in Theorem~\ref{thm:rota_second_ver} using this function, we obtain, for all $a\in P$ and $y\in Q$:

\begin{alignat*}{3}
&&(g_\# \circ \partial_Q)(1_y)(a) &= (\partial_P \circ f^\#)(1_y)(a) \\
\iff \qquad&&\sum_{x \in g^{-1}(a)} \partial_Q 1_y(x) 
&= \sum_{b \in P : a \leq b} (1_y \circ f)(b) \cdot \mu_P(a, b) \\
\iff \qquad&&\sum_{x \in g^{-1}(a)} \sum_{z \in Q : x \leq z} 1_y(z) \cdot \mu_Q(x, z) 
&= \sum_{b \in P : f(b) = y} \mu_P(a, b) \\
\iff \qquad&&\sum_{x \in g^{-1}(a)} \mu_Q(x, y) 
&= \sum_{b \in P : f(b) = y} \mu_P(a, b) \\
\iff \qquad&&\sum_{x \in Q : g(x) = a} \mu_Q(x, y) 
&= \sum_{b \in P : f(b) = y} \mu_P(a, b).
\end{alignat*}
Thus, Rota's Galois connection theorem is equivalent to the statement $(g_\# \circ \partial_Q)(1_y) = (\partial_P \circ f^\#)(1_y)$ for all $y\in Q$. Since $g_\sharp \circ \partial_Q$ and $\partial_P\circ f^\sharp$ are $\Gr$-module homomorphism, and since $\Gr^Q$ is generated by functions of the form $1_y$, the preceding statement is true if and only if $g_\sharp \circ \partial_Q = \partial_P\circ f^\sharp$.

\subsection{Euler Characteristic and Rota's Theorem}
We now prove in Theorem~\ref{thm:rota_ext} how Theorem~\ref{thm:categorical_RGCT}
decategorifies to Theorem~\ref{thm:rota_second_ver} via the Euler characteristic of the Ext-functor.

The following corollary is an immediate consequence of Theorem~\ref{thm:categorical_RGCT}.
\begin{cor} \label{cor:rota_mobius_co}
Let $f : P \rightleftarrows Q : g$ be a Galois connection, let $N \in \cat^Q$, and let $a \in P$. Then,
\begin{equation*}
\uExt^d(\One_a, f^*N) \cong \uExt^d(g^*\One_a, N).
\end{equation*}
\end{cor}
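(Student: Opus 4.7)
The plan is to lift the enriched-hom isomorphism $\uHom(\One_a, f^*N) \cong \uHom(g^*\One_a, N)$ obtained from Theorem~\ref{thm:categorical_RGCT} (by taking $M = \One_a$) to the level of derived functors. Concretely, I would choose an injective resolution of $N$ in $\cat^Q$, verify that applying $f^*$ produces an injective resolution of $f^*N$ in $\cat^P$, and then use naturality of the isomorphism in the second argument to pass to cohomology.

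The central technical ingredient is that $f^*$ is exact and preserves injective objects, and this preservation relies essentially on the Galois hypothesis. Exactness of $f^*$ is automatic since it is simultaneously a left adjoint (to $f_*$) and a right adjoint (to $f_\dagger$) by Proposition~\ref{prop:P-module_functors_adjoint}. For preservation of injectives, I would invoke the standard fact that a right adjoint of an exact functor preserves injectives: here $f^*$ is right adjoint to $f_\dagger$, and by Proposition~\ref{prop:galois_connection_functor_equalites} we have $f_\dagger = g^*$, which is itself exact since applying Proposition~\ref{prop:P-module_functors_adjoint} to $g$ exhibits $g^*$ as both a left adjoint (to $g_*$) and a right adjoint (to $g_\dagger$).

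With this in place, fix an injective resolution $0 \to N \hookrightarrow I^0 \to I^1 \to \cdots$ in $\cat^Q$. Applying the exact, injective-preserving functor $f^*$ produces an injective resolution
\[
0 \to f^*N \hookrightarrow f^*I^0 \to f^*I^1 \to \cdots
\]
in $\cat^P$. Theorem~\ref{thm:categorical_RGCT}, applied levelwise and exploiting naturality in the second argument (furnished by Lemmas~\ref{lem:tensor_hom} and \ref{lem:daggerpushforward_and_pullback_commute_with_tensor}), then yields an isomorphism of cochain complexes $\uHom(\One_a, f^*I^\bullet) \cong \uHom(g^*\One_a, I^\bullet)$. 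Taking cohomology in degree $d$, the left side computes $\uExt^d(\One_a, f^*N)$ (from the injective resolution above) and the right side computes $\uExt^d(g^*\One_a, N)$, giving the desired isomorphism.

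The only nontrivial step is the preservation of injectives, which is precisely where the Galois hypothesis enters; absent a Galois connection, $f_\dagger$ would only be right-exact, and the resolution $f^*I^\bullet$ could fail to be useful for computing Ext. An alternative route avoiding injectives altogether would be to observe that $f^*$ carries a principal cofree $Q$-module $X^{\downarrow x}$ to the principal cofree $P$-module $X^{\downarrow g(x)}$ (via $f(a)\leq x \Leftrightarrow a\leq g(x)$), so that $f^*$ of the standard cofree resolution of $N$ is an exact cofree resolution of $f^*N$; Proposition~\ref{prop:co_free} then allows one to compute both Ext groups from these cofree resolutions and compare them via Theorem~\ref{thm:categorical_RGCT} in exactly the same manner.
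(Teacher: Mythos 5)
Your proof is correct, and it actually supplies more justification than the paper does: the paper states the corollary as an ``immediate consequence'' of Theorem~\ref{thm:categorical_RGCT} with no further argument. The point you add is precisely the one that makes the deduction legitimate. The two sides are derived functors of \emph{different} functors evaluated at \emph{different} objects --- $\uHom(\One_a,-)$ at $f^*N$ versus $\uHom(g^*\One_a,-)$ at $N$ --- so the natural isomorphism of underived homs does not by itself yield the isomorphism of Ext objects; one needs that $f^*$ is exact and carries injectives to injectives (or at least to acyclic objects), so that $f^*I^\bullet$ is a legitimate resolution for computing $\uExt^\bullet(\One_a, f^*N)$. Your verification of this is sound: $f^*$ is exact because it is both a left and a right adjoint, and it preserves injectives because its left adjoint $f_\dagger = g^*$ is exact --- which is indeed exactly where the Galois hypothesis enters. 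Applying Theorem~\ref{thm:categorical_RGCT} levelwise, with naturality in the second argument furnished by Lemmas~\ref{lem:tensor_hom} and~\ref{lem:daggerpushforward_and_pullback_commute_with_tensor}, then gives an isomorphism of cochain complexes and hence of cohomology, as you say. Your alternative route is also valid: the equivalence $f(a)\leq x \Leftrightarrow a\leq g(x)$ gives $f^*(X^{\downarrow x}) \cong X^{\downarrow g(x)}$, so $f^*$ of the standard cofree resolution of $N$ is a cofree, hence $\uHom(\One_a,-)$-acyclic, resolution of $f^*N$; for that route you should also note $g^*\One_a \cong \One_Z$ with $Z = g^{-1}(a)$ a spread, so the acyclicity of cofree modules established in Section~\ref{sec:derived_hom} applies on the right-hand side as well.
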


\begin{thm}\label{thm:rota_ext}
Let \( f : P \rightleftarrows Q : g \) be a Galois connection and let \( N \in \cat^Q \). Then, for \( a \in P \),
\begin{equation}
\label{eq:cor_left}
(\partial_P \circ f^\#)( n)(a) = \chi ( \One_a, f^\ast N ) = \chi ( g^\ast \One_a, N ) = (g_\# \circ \partial_Q)(n) (a),
\end{equation}
where \( n : Q \to \Gr(\cat) \) is the dimension function of \( N \).
\end{thm}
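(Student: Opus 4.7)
The plan is to prove the chain of equalities from the middle outward. The central equality $\chi(\One_a, f^\ast N) = \chi(g^\ast \One_a, N)$ is immediate from Corollary~\ref{cor:rota_mobius_co}: that corollary yields $\uExt^d(\One_a, f^\ast N) \cong \uExt^d(g^\ast \One_a, N)$ as objects of $\cat$, so their classes in $\Gr(\cat)$ agree in each degree $d$, and summing against $(-1)^d$ gives the equality of Euler characteristics.

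For the leftmost equality, I will observe that the dimension function of the $P$-module $f^\ast N$ is precisely $f^\# n$. Indeed, $(f^\ast N)(a) = N(f(a))$ by definition of the pullback, so $[(f^\ast N)(a)] = n(f(a)) = (f^\# n)(a)$. Applying Theorem~\ref{thm:cohom_euler_char} to the $P$-module $f^\ast N$ at $a$ then yields $\chi(\One_a, f^\ast N) = \partial_P(f^\# n)(a) = (\partial_P \circ f^\#)(n)(a)$.

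The rightmost equality is the step that requires the most unpacking. First I identify $g^\ast \One_a$ with the indicator $Q$-module $\One_{g^{-1}(a)}$: by definition $g^\ast \One_a(x) = \One_a(g(x))$ equals $\One$ when $g(x) = a$ and $0$ otherwise, with the internal maps matching $\One_{g^{-1}(a)}$; note that $g^{-1}(a) \subset Q$ is a spread, since if $x \leq y \leq z$ with $g(x) = g(z) = a$, monotonicity and antisymmetry of $\leq$ force $g(y) = a$. Next, using the explicit formula in Proposition~\ref{prop:euler_indicator} and swapping the outer sum, I compute
\begin{align*}
\chi(\One_{g^{-1}(a)}, N)
&= \sum_{d \geq 0}(-1)^d \sum_{\substack{\sigma \in \Delta Q \\ \dim \sigma = d \\ \min \sigma \in g^{-1}(a)}} [N(\max \sigma)] \\
&= \sum_{x \in g^{-1}(a)} \sum_{d \geq 0}(-1)^d \sum_{\substack{\sigma \in \Delta Q \\ \dim \sigma = d \\ \min \sigma = x}} [N(\max \sigma)]
= \sum_{x \in g^{-1}(a)} \chi(\One_x, N).
\end{align*}
Applying Theorem~\ref{thm:cohom_euler_char} to $N$ at each $x$ gives $\chi(\One_x, N) = \partial_Q n(x)$, hence the total equals $\sum_{x \in g^{-1}(a)} \partial_Q n(x) = (g_\# \circ \partial_Q)(n)(a)$.

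No step is genuinely difficult; the only nonroutine point is the additivity of $\chi$ over the decomposition of the spread $g^{-1}(a)$ into its singletons, but this is visibly built into the explicit chain-counting formula of Proposition~\ref{prop:euler_indicator} via the $\min \sigma$ condition, so the argument is essentially bookkeeping on top of Corollary~\ref{cor:rota_mobius_co} and Theorem~\ref{thm:cohom_euler_char}.
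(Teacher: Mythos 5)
Your proposal is correct and follows essentially the same route as the paper's proof: the middle equality from Corollary~\ref{cor:rota_mobius_co}, the left equality by identifying $f^\# n$ as the dimension function of $f^\ast N$ and invoking Theorem~\ref{thm:cohom_euler_char}, and the right equality by identifying $g^\ast\One_a$ with the indicator of the spread $g^{-1}(a)$ and splitting the chain-counting sum of Proposition~\ref{prop:euler_indicator} over $\min\sigma = x$ for $x \in g^{-1}(a)$. You even supply the small verifications (that $g^{-1}(a)$ is a spread and that $g^\ast\One_a \cong \One_{g^{-1}(a)}$) that the paper leaves implicit.
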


\begin{proof}
We will prove the equalities in equation \eqref{eq:cor_left} one by one.

The middle equality follows directly from Corollary~\ref{cor:rota_mobius_co}.

The function \( f^\# n : P \to \Gr(\cat) \) is the dimension function for \( f^\ast N : Q \to \cat \). Thus, by Theorem~\ref{thm:cohom_euler_char}, we have:
   \[
   (\partial_P \circ f^\# n)(a) = \chi(\One_a, f^\ast N).
   \]

To establish the right equality, consider the spread 
   \[
   Z := \{ x \in Q : g(x) = a \}.
   \]
   By the definition of \( g^\ast \), it follows that \( g^\ast \One_a \cong \One_Z \). Therefore, we can express the Euler characteristic as follows:
   \begin{align*}
\chi(g^*\One_a, N) &= \sum_{d\geq 0}(-1)^d \sum_{\substack{\tau \in \Delta Q \\ \dim \tau = d \\ \min \tau \in Z}} \big[ N(\max \tau) \big] && \text{ by Proposition~\ref{prop:euler_indicator}} \\
&= \sum_{x\in Z}\sum_{d\geq 0}(-1)^d\sum_{\substack{\tau \in \Delta Q \\ \dim \tau = d \\ \min \tau =x}} \big[ N(\max\tau) \big] \\
&= \sum_{x \in Z} \partial_Q n(x) && \text{ by Proposition~\ref{prop:euler_indicator}} \\
&= (g_\# \circ \partial_Q)(n) (a). && \text{ by definition of $g_\#$} \qedhere
\end{align*} 
\end{proof}


\bibliographystyle{alpha}
\bibliography{references}{}

\begin{thebibliography}{CLL80}

\bibitem[Bac75]{Baclawski1975}
Kenneth Baclawski.
\newblock Whitney numbers of geometric lattices.
\newblock {\em Advances in Mathematics}, 16:125--138, 1975.

\bibitem[Bac77]{Baclawski1977}
Kenneth Baclawski.
\newblock Galois connections and the {L}eray spectral sequence.
\newblock {\em Advances in Mathematics}, 25(2):191--215, 1977.

\bibitem[CLL80]{Content1980}
Mireille Content, Fran\c{c}ois Lemay, and Pierre Leroux.
\newblock Cat\'egories de {M}\"obius et fonctorialit\'e: un cadre g\'en\'eral
  pour l'inversion de {M}\"obius.
\newblock {\em Journal of Combinatorial Theory, Series A}, 28(2):169--190,
  1980.

\bibitem[Cur18]{curry2018}
Justin~Michael Curry.
\newblock Dualities between cellular sheaves and cosheaves.
\newblock {\em Journal of Pure and Applied Algebra}, 222(4):966--993, 2018.

\bibitem[Deh62]{Deheuvels1962}
R.~Deheuvels.
\newblock Homologie des ensembles ordonnés et des espaces topologiques.
\newblock {\em Bulletin de la Soci\'et\'e Math\'ematique de France},
  90:261--321, 1962.

\bibitem[GM03]{GelfandManin2003}
I.~Gelfand and M.~Manin.
\newblock {\em Methods of Homological Algebra}.
\newblock Springer Monographs in Mathematics. Springer-Verlag, Berlin, 2nd
  edition, 2003.

\bibitem[GM22]{GulenMcCleary}
Aziz~Burak G\"ulen and Alexander McCleary.
\newblock Galois connections in persistent homology.
\newblock arXiv, July 2022.

\bibitem[KM23]{persistence_over_posets}
Woojin Kim and Facundo M\'emoli.
\newblock Persistence over posets.
\newblock {\em Notices of the American Mathematical Society}, 70(8), September
  2023.

\bibitem[Ler75]{Leroux1975}
Pierre Leroux.
\newblock Les cat\'egories de {M}\"obius.
\newblock {\em Cahiers de Topologie et G\'eom\'etrie Diff\'erentielle
  Cat\'egoriques}, 16(3):280--282, 1975.

\bibitem[ML98]{maclane1998categories}
Saunders Mac~Lane.
\newblock {\em Categories for the Working Mathematician}.
\newblock Springer, 2nd edition, 1998.

\bibitem[OS24]{doi:10.1137/22M1489150}
Steve Oudot and Luis Scoccola.
\newblock On the stability of multigraded {B}etti numbers and {H}ilbert
  functions.
\newblock {\em SIAM Journal on Applied Algebra and Geometry}, 8(1):54--88,
  2024.

\bibitem[Pat18]{Patel:2018}
Amit Patel.
\newblock Generalized persistence diagrams.
\newblock {\em Journal of Applied and Computational Topology}, 1(3-4):397--419,
  2018.

\bibitem[PS23]{patel2023mobius_homology}
Amit Patel and Primoz Skraba.
\newblock M\"obius homology.
\newblock {\em arXiv preprint arXiv:2307.01040}, 2023.

\bibitem[Rot64]{rota1964foundations}
Gian-Carlo Rota.
\newblock On the foundations of combinatorial theory {I}. {T}heory of
  {M}\"obius functions.
\newblock {\em Zeitschrift f\"ur Wahrscheinlichkeitstheorie und Verwandte
  Gebiete}, 2(4):340--368, 1964.

\bibitem[Sta11]{stanley_2011}
Richard~P. Stanley.
\newblock {\em Enumerative Combinatorics}, volume~1 of {\em Cambridge Studies
  in Advanced Mathematics}.
\newblock Cambridge University Press, 2nd edition, 2011.

\bibitem[Wei94]{weibel1994homological}
Charles~A. Weibel.
\newblock {\em An Introduction to Homological Algebra}.
\newblock Cambridge University Press, 1994.

\end{thebibliography}

\end{document}